\newtheorem{theorem}{Theorem}[section]
\newtheorem{prop}{Proposition}[section]
\newtheorem{corollary}{Corollary}[section]
\newtheorem{definition}{Definition}
\newtheorem{remark}{Remark}[section]
\newtheorem{example}{Example}[section]
\title{Belyi Function Decompositions for The Icosahedron of Genus 4}
\author{Natalia Amburg \and Mariia Kovaleva}
\date{}
\begin{document}
\maketitle

\hfill Dedicated to the memory of A.V. Mikhalev.

\begin{abstract}
   The icosahedron $I_4$ of genus 4 is a dessin d'enfant embedded in Bring's curve $\mathcal{B}$. The dessin $I_4$ is related in some sense to a regular icosahedron $I_0$ embedded in the complex Riemann sphere. In particular, decompositions of Belyi functions $\beta_{I_0}: \mathbb{CP}^1 \rightarrow \mathbb{CP}^1$ and $\beta_{I_4}: \mathcal{B} \rightarrow \mathbb{CP}^1$ for $I_0$ and $I_4$ have the same lattice. The diagram of $\beta_{I_0}$ decompositions is already known. In the present paper we find $\beta_{I_4}$ decompositions. Note that $\beta_{I_0}$ decomposes into rational functions on $\mathbb{C}P^1$, while in case of $\beta_{I_4}$ we deal with maps between different algebraic curves.
   
   {\bf Key words}: dessins d'enfants, Belyi functions, functional composition, icosahedron of genus 4, Bring's curve.
\end{abstract}

\tableofcontents

\section{Introduction}
In the early 20th century Joseph Ritt proved his polynomial decomposition theorem. Suppose we have two decompositions of a polynomial $f$ into indecomposible polynomials of degrees greater than one
\begin{gather*}
    f = g_{1}\circ g_{2}\circ \cdots \circ g_{m} = h_{1}\circ h_{2}\circ \cdots \circ h_{n} \quad g_{i}\neq h_{i} \text{ for some i.} 
\end{gather*}
Then $m=n$, and the degrees of the components are the same but possibly in different order \cite{Ritt}.
However, the analogous problem for rational functions is more difficult. For example, a Belyi function $\beta_{I_0}: \mathbb{C}P^1\rightarrow\mathbb{C}P^1$ for a regular icosahedron $I_0$ has various decompositions with different number of indecomposable functions (see Figure \ref{im_i0}).
The notion of Belyi function is associated with dessins d'enfants (which means "child's drawing" in French). The theory of dessins d'enfants was developed by Alexander Grothendieck in 1980s, it connects topological and combinatorial objects with algebraic curves. A dessin d'enfant (or simply dessin) is a type of graph drawn on a topological surface with a single stroke of a pencil. A remarkable fact is that these drawings give us an exact characterisation of the algebraic curves.
In this paper we consider the icosahedron $I_4$ of genus 4, which has the automorphism group $A_5$ as well as a regular icosahedron $I_0$. Saying in a very brief way, the icosahedron $I_4$ of genus 4 is a dessin embedded in a genus-4 surfaces, on which $I_4$ induces the complex structure of Bring's curve $\mathcal{B}\subset \mathbb{CP}^4$ by its Belyi function $\beta_{I_4}:\mathcal{B} \rightarrow \mathbb{C}P^1$ (see Theorem \ref{existence_thr}). Decompositions of Belyi functions $\beta_{I_0}: \mathbb{CP}^1 \rightarrow \mathbb{CP}^1$ and $\beta_{I_4}: \mathcal{B} \rightarrow \mathbb{CP}^1$ have the same lattice. To decompose $\beta_{I_4}$ we consider quotients of $I_4$ by subgroups of its automorphism group. Also, we find equations of corresponding quotient curves. The full diagram of $\beta_{I_4}$ decompositions is presented in Figure \ref{full_gensch}.

\section{Dessin d’enfant}
\begin{definition}
[\textbf{Dessin d’enfant}] A dessin d’enfant is a bicolored graph embedded in a compact oriented two-dimensional manifold in such a way that:
\begin{itemize}
    \item The edges do not intersect.
    \item The complement of the graph is a disjoint union of regions homeomorphic to open disks.
    \item The vertices are colored in black and white in such a way that the adjacent vertices have opposite colors.
\end{itemize}
 The genus of a dessin is the genus of the underlying surface.   
\end{definition}
\begin{remark}
    In this paper we usually erase white vertices of degree 2 in order to simplify pictures.
\end{remark}
Dessins admit an encoding by permutations. Let $M$ be a dessin with $m$ edges. We label the edges from $1$ to $m$ and associate $M$ with the following pair of permutations $(\sigma, \alpha)$:
\begin{itemize}
    \item A cycle of $\sigma \in S_m$ contains the labels of edges incident with a black vertex, taken in counterclockwise direction around this vertex.
    \item The cycles of $\alpha \in S_m$ correspond, in the same way, to the white vertices.
\end{itemize}
\begin{remark}\cite{Lando}
    The permutation $(\sigma \alpha)^{-1}$ represents faces of the corresponding dessin. There are as many cycles in $(\sigma \alpha)^{-1}$ as there are faces. Also, the degree of a face is equal to the length of the corresponding cycle.
\end{remark}
\begin{example}
    \begin{figure}[hbt!]
        \centering
        \begin{normalsize} 
            \includegraphics[scale=0.3]{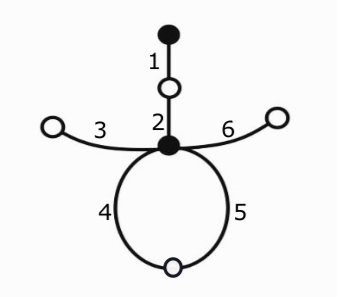}
       \end{normalsize}
       \caption{Dessin d’enfant}
       \label{im_perm}
    \end{figure}
    For the dessin of Figure \ref{im_perm} we obtain the following permutations:
    \begin{gather}
        \sigma = (1)(23456),\quad
        \alpha = (12)(3)(45)(6),\quad
        (\sigma \alpha)^{-1} = (12643)(5).   
    \end{gather}
\end{example}
\begin{prop} \label{triple_perm} \cite{Zvonkin}
    A pair of permutations $\sigma, \alpha \in S_m$ is associated with a dessin iff the permutation group generated by $\sigma, \alpha$ is transitive.
\end{prop}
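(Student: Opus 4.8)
The plan is to prove the two implications separately: the ``only if'' direction by a connectivity argument on the embedded graph, and the ``if'' direction by the ribbon--graph (rotation system) construction. For the ``only if'' direction, suppose $(\sigma,\alpha)$ arises from a dessin $M$ with $m$ edges on a compact oriented surface $S$. First I would observe that the underlying graph of $M$ is connected: since every component of $S\setminus M$ is an open disk, $S$ is a CW complex whose $1$-skeleton is the graph of $M$ and whose $2$-cells are the faces, and each $2$-cell is attached along a connected loop that therefore lies over a single connected component of the graph; hence a disconnected graph would force $S$ itself to be disconnected, contradicting that $S$ is a connected surface. Given connectivity, take any two edge labels $i,j\in\{1,\dots,m\}$ and a walk in the graph joining an endpoint of edge $i$ to an endpoint of edge $j$. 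At each step the two consecutive edges of the walk share a vertex; sharing a black (resp.\ white) vertex means their labels lie in a common cycle of $\sigma$ (resp.\ $\alpha$), so one label is carried to the other by a suitable power of $\sigma$ (resp.\ $\alpha$). Composing these moves along the walk exhibits an element of $\langle\sigma,\alpha\rangle$ sending $i$ to $j$, so the group is transitive.

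For the ``if'' direction, assume $G:=\langle\sigma,\alpha\rangle$ acts transitively on $\{1,\dots,m\}$ and build a dessin realizing $(\sigma,\alpha)$. I would first form the abstract bicolored graph whose black vertices are the cycles of $\sigma$, whose white vertices are the cycles of $\alpha$, and whose edges are the labels $1,\dots,m$, the edge $i$ joining the $\sigma$-cycle through $i$ to the $\alpha$-cycle through $i$; transitivity of $G$ makes this graph connected. Endowing each black vertex with the cyclic order of its incident edges prescribed by $\sigma$, and each white vertex with the cyclic order prescribed by $\alpha$, turns the graph into a ribbon graph. The standard thickening procedure---replace each vertex by a disk, each edge by a band, and glue the bands to the disks respecting the cyclic orders---produces a compact oriented surface with boundary; capping each boundary circle with a disk yields a closed oriented surface $S$ in which the graph is cellularly embedded, the capping disks being precisely its faces. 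By construction the edges incident to a black vertex, read counterclockwise, form exactly the corresponding cycle of $\sigma$, and likewise for white vertices and $\alpha$, so the dessin just built has associated pair $(\sigma,\alpha)$.

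I expect the main obstacle to be the topological step of the reverse direction: carrying out the ribbon--graph thickening carefully and checking that orientations are globally consistent, that the resulting surface is compact, connected and orientable, and---most importantly---that the combinatorial data $(\sigma,\alpha)$ is faithfully recovered by the counterclockwise reading conventions used in the excerpt to define the permutation pair of a dessin. One could instead invoke the equivalence between dessins and transitive $\langle\sigma,\alpha\rangle$-actions through covering spaces of $\mathbb{CP}^1\setminus\{0,1,\infty\}$ (monodromy around $0$ and $1$), but the ribbon--graph argument keeps everything within the combinatorial--topological setting already in use; either way the bookkeeping of cyclic orders and orientations is where the real work lies. As a consistency check I would finally note that Euler's formula gives the genus $g$ via $2-2g=(\#\text{cycles of }\sigma)-m+(\#\text{cycles of }(\sigma\alpha)^{-1})$, in agreement with the face count in the earlier remark.
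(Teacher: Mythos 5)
The paper offers no proof of this proposition---it is quoted from Zvonkin's book---so there is nothing internal to compare against; your argument is the standard one and is correct: connectivity of the embedded graph (forced by the face condition on a connected surface) yields transitivity via powers of $\sigma$ and $\alpha$ along a walk, and conversely the ribbon-graph thickening of the bipartite graph on the cycles of $\sigma$ and $\alpha$ realizes any transitive pair. This is essentially the treatment in the cited reference, and the one genuinely delicate point (consistency of the counterclockwise conventions and orientations in the thickening) is correctly identified rather than glossed over.
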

\begin{definition}[\textbf{Belyi pair}] 
    A pair $(X,\beta)$, where $X$ is a Riemann surface, and $\beta: X\rightarrow \overline{\mathbb{C}}$ is a meromorphic function, is called a Belyi pair if all critical values of $\beta$ belong to $\{0,1,\infty\}$.
\end{definition}
\begin{theorem}[\textbf{Riemann's existence theorem}]\label{existence_thr}\cite{Zvonkin}
     For any dessin $M$ there exists a Belyi pair $(X, \beta)$ such that $M$ is embedded in $X$ as $M = \beta^{-1}([0,1])$ in such a way that:
     \begin{enumerate}
         \item All black vertices of $M$ are roots of the equation $\beta(x) = 0$, the multiplicity of each root being equal to the degree of the corresponding vertex.
         \item All white vertices of $M$ are roots of the equation $\beta(x) = 1$, the multiplicity of each root being equal to the degree of the corresponding vertex.
         \item Inside each face of $M$ there exists a single pole of $\beta$, the multiplicity of the pole being equal to the degree of the face.
     \end{enumerate}
     The pair $(X, \beta)$ is unique up to an automorphism of the surface $X$.
\end{theorem}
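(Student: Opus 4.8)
The plan is to reduce the statement to the classical form of Riemann's existence theorem for branched covers of the sphere, using the permutation encoding of dessins recalled above. First I would fix an arbitrary dessin $M$ with $m$ edges, label the edges $1,\dots,m$, and form the pair $(\sigma,\alpha)\in S_m\times S_m$ associated with $M$, together with $\varphi:=(\sigma\alpha)^{-1}$. By Proposition \ref{triple_perm} the subgroup $G=\langle\sigma,\alpha\rangle\le S_m$ acts transitively on $\{1,\dots,m\}$. Now set $S=\{0,1,\infty\}\subset\overline{\mathbb{C}}$ and recall that $\pi_1(\overline{\mathbb{C}}\setminus S,\ast)$ is free on loops $\gamma_0,\gamma_1$ encircling $0$ and $1$ once counterclockwise, with a relation $\gamma_0\gamma_1\gamma_\infty=1$ for a suitable small loop $\gamma_\infty$ around $\infty$. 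Define the monodromy homomorphism $\rho\colon\pi_1(\overline{\mathbb{C}}\setminus S,\ast)\to S_m$ by $\rho(\gamma_0)=\sigma$ and $\rho(\gamma_1)=\alpha$, so that $\rho(\gamma_\infty)=\varphi$; its image is $G$, hence $\rho$ is a transitive representation.

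Next I would invoke the classical Riemann existence theorem: to the transitive representation $\rho$ there is associated a connected compact Riemann surface $X$ and a holomorphic map $\beta\colon X\to\overline{\mathbb{C}}$ of degree $m$, unramified over $\overline{\mathbb{C}}\setminus S$ and having monodromy $\rho$, and $(X,\beta)$ is unique up to isomorphism over $\overline{\mathbb{C}}$. (One builds $X$ by analytic continuation of the $m$ sheets over $\overline{\mathbb{C}}\setminus S$ and adjoins the finitely many ramification points lying over $S$ via their Puiseux expansions; compactness and the complex structure are then automatic.) Since all critical values of $\beta$ lie in $S=\{0,1,\infty\}$, the pair $(X,\beta)$ is a Belyi pair, and the uniqueness clause of the theorem is exactly the uniqueness clause in the branched-cover statement.

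It then remains to identify $M$ inside $X$ as $\beta^{-1}([0,1])$ with the stated multiplicities. The open segment $(0,1)$ is simply connected and meets no point of $S$, so $\beta^{-1}((0,1))$ is a disjoint union of $m$ open arcs, each mapped homeomorphically onto $(0,1)$; label each arc by the sheet carrying it. The fibre $\beta^{-1}(0)$ consists of one point for each cycle of $\sigma=\rho(\gamma_0)$, the local degree of $\beta$ at such a point equalling the length of its cycle, and the cyclic order in which the arcs limit onto that point is precisely that cycle of $\sigma$; the same holds over $1$ with $\alpha$ in place of $\sigma$. Hence $\beta^{-1}([0,1])$ is a bicoloured graph with black vertices $\beta^{-1}(0)$, white vertices $\beta^{-1}(1)$ and $m$ edges, whose rotation system is $(\sigma,\alpha)$; since a dessin is determined up to isomorphism by its permutation pair (the combinatorial encoding discussed above), this graph is isomorphic to $M$ as a dessin, giving items 1 and 2. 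Finally $\overline{\mathbb{C}}\setminus[0,1]$ is an open topological disk containing $\infty$; over the once-punctured disk $\beta$ restricts to a covering which splits into annuli indexed by the cycles of $\varphi=\rho(\gamma_\infty)$, and filling in the single point of $\beta^{-1}(\infty)$ over each cycle turns each annulus into an open disk containing exactly one pole, whose order equals the cycle length, i.e.\ the degree of the corresponding face. This proves item 3, and in particular shows that the complement of the graph is a union of disks, so that $(X,\beta)$ realizes $M$.

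I expect the main obstacle to be conceptual rather than computational: one must fix, once and for all, a consistent orientation convention so that the counterclockwise rotation of edges around a black vertex of $M$ matches the local monodromy of $\gamma_0$ around $0$ in the constructed cover, and likewise at white vertices and faces. Once this dictionary is in place, every remaining verification is local and routine — near a point of $\beta^{-1}(\{0,1,\infty\})$ the map $\beta$ is analytically equivalent to $z\mapsto z^{k}$, and the global statements are assembled using transitivity of $G$. The only genuinely external input is the classical Riemann existence theorem for branched covers of $\overline{\mathbb{C}}$ (equivalently, the equivalence between finite covers of $\overline{\mathbb{C}}\setminus\{0,1,\infty\}$ and finite transitive $\pi_1$-sets, together with the fact that a compact Riemann surface carries a unique structure of a smooth projective algebraic curve).
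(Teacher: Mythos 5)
The paper does not prove this theorem: it is quoted from the literature (Lando--Zvonkin) and used as a black box, so there is no internal proof to compare against. Your sketch is the standard argument for exactly this statement --- encode $M$ by the permutation pair $(\sigma,\alpha)$, realize it as the monodromy of a degree-$m$ branched cover of $\overline{\mathbb{C}}$ unramified outside $\{0,1,\infty\}$ via the classical Riemann existence theorem, and then read off $\beta^{-1}([0,1])$ and the local normal forms $z\mapsto z^k$ over $0$, $1$, $\infty$ to recover the vertices, edges and faces with the stated multiplicities --- and it is correct as a proof outline, including your explicit flagging of the orientation convention. The only point worth making explicit is in the uniqueness clause: relabelling the edges of $M$ changes $(\sigma,\alpha)$ only by simultaneous conjugation, and conjugate monodromy representations give isomorphic covers, which is what ties the uniqueness of $(X,\beta)$ to the uniqueness statement of the classical theorem.
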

In other words, dessins are simple
topological encodings of these Belyi pairs. 
\begin{definition}[\textbf{Automorphisms of dessins}]
    Assume that a Belyi pair $(X, \beta)$ corresponds to a dessin $M$. Let as define an automorphism of the dessin $M$ as an automorphism $\varphi$ of the curve $X$ such that $\beta =\beta\circ\varphi$. 
\end{definition}
  

\section{Icosahedron of Genus 4}
Consider a regular icosahedron with white vertices in the middle of edges as a dessin embedded in the sphere and write down the corresponding permutations $(\sigma, \alpha)$.
Following to \cite{Zvonkin}, let us define \textbf{the icosahedron $I_4$ of genus 4} as a dessin associated with the pair of permutations $(\sigma^2, \alpha)$.
The definition is correct by Proposition \ref{triple_perm}. As you can check,
\begin{itemize}
    \item $\sigma^2$ has 12 cycles of length 5 $\Rightarrow$ $I_4$ has 12 (black) vertices of degree 5
    \item $\alpha$ has 30 cycles of length 2 $\Rightarrow$ $I_4$ has 30 edges (i.e. 30 white vertices of degree 2)
    \item $(\sigma^2\alpha)^{-1}$ has 12 cycles of length 5 $\Rightarrow$ $I_4$ has 12 faces of degree 5
\end{itemize}
 By Euler's formula, $I_4$ is embedded in the surface of genus 4. 
 \begin{figure}[h]
    \centering
    \includegraphics[scale=0.7]{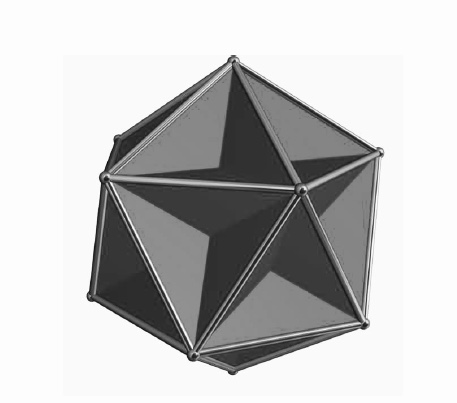}
    \caption{Great Dodecahedron}
    \label{im_i4_r3}
\end{figure}
 There is a nice way to visualize $I_4$ as the Great Dodecahedron (Figure \ref{im_i4_r3}), which is one of four nonconvex regular polyhedra. Note that vertices and edges of the Great Dodecahedron and a regular icosahedron are the same. 
 The permutation $(\sigma\alpha)^{-1}$ maps an edge incident with a vertex to the next edge in the cyclic order and we obtain triangular faces of a regular icosahedron. While the permutation $(\sigma^2\alpha)^{-1}$ maps an edge incident with a vertex to the one after the next and we obtain pentagonal faces of $I_4$. So, we can consider the Great Dodecahedron as an immersion of the surface of genus 4 into $\mathbb{R}^3$.
\subsection{Belyi function for $I_4$}
In 1786 Erland Bring developed a transformation to simplify a generic quintic equation to the form 
\begin{equation}\label{eq_p}
    p(x) = x^5 + ax + b.
\end{equation}
Note that the roots $x_i$ of the Bring quintic equation $p(x) = 0$ satisfy 
\begin{equation}\label{eqbring}
    \begin{cases}
        x_1 + x_2 + x_3 + x_4 + x_5 = 0\\
        x_1^2 + x_2^2 + x_3^2 + x_4^2 + x_5^2 = 0\\
        x_1^3 + x_2^3 + x_3^3 + x_4^3 + x_5^3 = 0
    \end{cases}
\end{equation}
In 1884 Felix Klein showed that homogeneous equations (\ref{eqbring}) cut out the curve $\mathcal{B}$ of genus 4 in $\mathbb{C}P^4$ and named it \textbf{Bring's curve}. Also, Klein proved that $I_4$ is embedded in $\mathcal{B}$ \cite{Klein}.
\begin{prop}\cite{Zvonkin}
    The automorphism group of Bring's curve $\mathcal{B}$ is the symmetric group $S_5$, which acts on $\mathcal{B}$ by permutations of five projective coordinates.
\end{prop}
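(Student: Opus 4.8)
\emph{Plan of proof.} This is a classical fact, but here is the line of argument I would follow. The plan is to prove the inclusions $S_5\subseteq\mathrm{Aut}(\mathcal{B})$ and $\mathrm{Aut}(\mathcal{B})\subseteq S_5$ separately, with essentially all of the work in the second. The first is immediate: any permutation of the five homogeneous coordinates is a projective automorphism of $\mathbb{C}P^4$, and since the three defining forms $\sum x_i$, $\sum x_i^2$, $\sum x_i^3$ of $\mathcal{B}$ are symmetric, such a permutation preserves $\mathcal{B}$; the resulting homomorphism $S_5\to\mathrm{Aut}(\mathcal{B})$ is injective because an irreducible genus-$4$ curve cannot lie on a conic, so $\mathcal{B}$ spans the hyperplane $\{\sum x_i=0\}\cong\mathbb{C}P^3$, which no nontrivial coordinate permutation fixes pointwise. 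Hence the real task is to show $|\mathrm{Aut}(\mathcal{B})|=120$.

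For that I would begin with the Hurwitz bound: since $g(\mathcal{B})=4$, we have $|\mathrm{Aut}(\mathcal{B})|\le 84(g-1)=252$, and since $120=|S_5|$ divides $|\mathrm{Aut}(\mathcal{B})|$ by Lagrange's theorem, the only possibilities are $|\mathrm{Aut}(\mathcal{B})|\in\{120,240\}$. Everything reduces to excluding the value $240$. So suppose $G:=\mathrm{Aut}(\mathcal{B})$ has order $240$. Then $S_5$ is a subgroup of index $2$, hence normal; and since $Z(S_5)=1$ and $\mathrm{Out}(S_5)=1$, the conjugation homomorphism $G\to\mathrm{Aut}(S_5)=\mathrm{Inn}(S_5)\cong S_5$ restricts to an isomorphism on $S_5$, so its kernel is a central order-$2$ subgroup $\langle\tau\rangle$ with $\tau\notin S_5$. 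Thus $\tau$ is a nontrivial involution of $\mathcal{B}$ commuting with all of $S_5$, and $G=S_5\times\langle\tau\rangle$.

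I would then get the contradiction from the fixed-point set $\mathrm{Fix}(\tau)\subset\mathcal{B}$. Because $\tau$ is central, $\mathrm{Fix}(\tau)$ is $S_5$-invariant, and because $g\ge 2$ it is finite. Riemann--Hurwitz for the degree-$2$ quotient $\mathcal{B}\to\mathcal{B}/\langle\tau\rangle$ gives $6=2(2g'-2)+|\mathrm{Fix}(\tau)|$, i.e.\ $|\mathrm{Fix}(\tau)|=10-4g'$; in particular $\mathrm{Fix}(\tau)\neq\emptyset$ (otherwise $g'=5/2$) and $|\mathrm{Fix}(\tau)|\le 10$. On the other hand $\mathrm{Fix}(\tau)$ is a disjoint union of $S_5$-orbits, and the stabiliser in $\mathrm{Aut}(\mathcal{B})$ of any point of a compact Riemann surface of genus $\ge 2$ is cyclic (an automorphism fixing a point and acting trivially on the cotangent line there is the identity, so a point stabiliser embeds in $\mathbb{C}^{*}$ and, being finite, is cyclic). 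Hence the $S_5$-stabiliser of any point of $\mathrm{Fix}(\tau)$ is a cyclic subgroup $H\le S_5$, and its orbit has size $[S_5:H]$; but the orders of elements of $S_5$ are $1,2,3,4,5,6$, so a cyclic subgroup has order at most $6$ and therefore index at least $120/6=20$. It follows that $|\mathrm{Fix}(\tau)|\ge 20$, contradicting $|\mathrm{Fix}(\tau)|\le 10$. This eliminates $240$, so $|\mathrm{Aut}(\mathcal{B})|=120$ and $\mathrm{Aut}(\mathcal{B})=S_5$, acting by coordinate permutations.

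The step I expect to be the main obstacle is exactly the exclusion of order $240$: narrowing the possibilities by Hurwitz is routine, but the argument then depends on the observation that the putative extra central involution must have an $S_5$-stable fixed locus, and on pairing this against the cyclicity of point stabilisers. A more geometric alternative would use that $\mathcal{B}$ is a smooth complete intersection of a quadric and a cubic in $\mathbb{C}P^3$, hence the canonical model of a non-hyperelliptic genus-$4$ curve, so that $\mathrm{Aut}(\mathcal{B})$ acts linearly on $\mathbb{C}P^3$ preserving the unique quadric $\sum x_i^2=0$ --- embedding into its projective orthogonal group $\cong(\mathrm{PGL}_2(\mathbb{C})\times\mathrm{PGL}_2(\mathbb{C}))\rtimes C_2$ --- and also preserving the cubic $\sum x_i^3=0$ modulo that quadric; but carrying this out requires the classification of the relevant finite subgroups, so I would keep the Riemann--Hurwitz argument as the main line.
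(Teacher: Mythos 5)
Your argument is correct, and it is worth noting that the paper itself offers no proof of this proposition at all: it is stated with a citation to Zvonkin's book as a known classical fact. So there is nothing in the paper to compare against line by line; what you have written is a genuinely self-contained proof where the authors rely on the literature. The two halves of your argument both check out. The easy inclusion $S_5\subseteq\mathrm{Aut}(\mathcal{B})$ is fine (faithfulness also follows simply because the permutation representation on the hyperplane $\sum x_i=0$ is the standard $4$-dimensional faithful representation of $S_5$, and $\mathcal{B}$ spans that hyperplane as a canonical curve of genus $4$ --- your ``cannot lie on a conic'' phrasing is a slightly compressed version of this). For the reverse inclusion, the Hurwitz bound $84(g-1)=252$ together with Lagrange indeed leaves only $120$ and $240$; the centralizer computation correctly forces a hypothetical group of order $240$ to be $S_5\times\langle\tau\rangle$ with $\tau$ a central involution; Riemann--Hurwitz gives $|\mathrm{Fix}(\tau)|=10-4g'\in\{2,6,10\}$, in particular nonempty and at most $10$; and since $\mathrm{Fix}(\tau)$ is a union of $S_5$-orbits whose point stabilizers are cyclic of order at most $6$, each orbit has size at least $20$, a contradiction. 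This is a clean and standard way to pin down the automorphism group, and it is arguably the most economical route short of invoking the classification of automorphism groups of genus-$4$ curves or the explicit analysis of the quadric--cubic pencil that you sketch as an alternative.
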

\begin{example}
\begin{gather*}
    (12345) \in \text{Aut}(\mathcal{B}) \quad  (12345)(x_1:x_2:x_3:x_4:x_5) = (x_2:x_3:x_4:x_5:x_1)
\end{gather*}
\end{example}
\noindent A set of five roots of the equation $p(x) = 0$ usually gives rise to 120 points on Bring's curve $\mathcal{B}$. Indeed, the number of permutations of five roots $p(x)$ equals to $5! = 120$.
\begin{prop}\label{prop_beta_i4}
    A Belyi function $\mathcal{B}\rightarrow\mathbb{C}P^1$ for $I_4$ is given by
    \begin{equation}\label{eq_beta_i4}
        \beta_{I_4}(x)= 1 + \frac{25b^2}{128a^5}\cdot(125b^2 
        + \sqrt{5}\prod\limits_{1\leqslant i<j \leqslant 5}(x_i - x_j)),
    \end{equation}
    where $a = \sum\limits_{1 \leqslant i < j < k < s \leqslant 5}^5 x_i x_j x_k x_s, \quad b = - x_1x_2x_3x_4x_5$.
\end{prop}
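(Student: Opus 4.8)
The plan is to verify that the meromorphic function $\beta_{I_4}$ defined by (\ref{eq_beta_i4}) satisfies the three conditions of Riemann's existence theorem (Theorem \ref{existence_thr}) for the dessin $I_4$, and then invoke the uniqueness clause of that theorem. First I would note that the key quantity is $\Delta = \prod_{i<j}(x_i-x_j)$, the Vandermonde/discriminant-type product. On Bring's curve $\mathcal{B}$, the elementary symmetric functions $e_1=e_2=e_3=0$ by (\ref{eqbring}) (after translating power sums to elementary symmetric functions via Newton's identities), while $e_4 = a$ and $e_5 = -b$ are the free parameters. Thus $\Delta^2 = \mathrm{disc}(p)$ is a polynomial in $a,b$ — for $p(x)=x^5+ax+b$ one has $\mathrm{disc}(p) = 256a^5 + 3125b^4$ — so $\Delta$ itself is a well-defined function on (a double cover related to) $\mathcal{B}$; here the factor $\sqrt5$ and the constant $125b^2$ are arranged precisely so that the whole expression is rational on $\mathcal{B}$ and lands in the right normalization. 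I would set $t = \frac{\sqrt5\,\Delta}{125b^2}$ so that $\beta_{I_4} = 1 + \frac{3125b^4}{128a^5}(1+t)$, making the structure transparent.

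Next I would identify the three fibers. The $A_5$-automorphisms of $\mathcal{B}$ (even permutations of coordinates) fix $\Delta$, hence fix $\beta_{I_4}$, so $\beta_{I_4}$ is genuinely a function on the quotient and the $120$ points over a generic value of $(a,b)$ split into $S_5$-orbits compatibly with the dessin's structure. For condition (1), I would compute $\beta_{I_4}=0$: this forces $125b^2 + \sqrt5\,\Delta = -128a^5/(25b^2)$, equivalently a single polynomial relation in $a,b$; I expect this locus to consist of $12$ points (one $A_5$-orbit of size $12$, i.e. the cosets of a cyclic group of order $5$) each of ramification index $5$, matching the $12$ black vertices of degree $5$. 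For condition (2), $\beta_{I_4}=1$ forces $\Delta = -125b^2/\sqrt5$, i.e. $5\Delta^2 = 125^2 b^4$, hence $256a^5+3125b^4 = 3125b^4$ up to the scalar bookkeeping, giving $a^5=0$-type degeneration or rather a locus of $30$ points each with multiplicity $2$ (the $30$ white vertices / edges). For condition (3), $\beta_{I_4}=\infty$ happens exactly where $a^5=0$ in the denominator, i.e. on the $S_5$-orbit where $p$ has a zero root configuration degenerating appropriately; I would check this gives $12$ faces of degree $5$ by a ramification count.

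The cleanest way to organize the verification is via the Riemann–Hurwitz formula: $\beta_{I_4}$ has degree $30$ (the number of edges), and if the ramification over $0$, $1$, $\infty$ is $(5^{12})$, $(2^{30})$, $(5^{12})$ respectively, then $2g-2 = 30\cdot(-2) + 12\cdot4 + 30\cdot1 + 12\cdot4 = -60+48+30+48 = \text{?}$ — I would recompute carefully, but the point is that consistency of the ramification data with $g(\mathcal{B})=4$ together with matching the cycle types of $(\sigma^2,\alpha,(\sigma^2\alpha)^{-1})$ pins down the dessin. Finally, since the permutation triple for $I_4$ generates a transitive group (Proposition \ref{triple_perm}) and the monodromy of $\beta_{I_4}$ around $0,1,\infty$ realizes exactly these cycle types on the $30$ sheets, the Belyi pair $(\mathcal{B},\beta_{I_4})$ is the one associated to $I_4$ by the uniqueness in Theorem \ref{existence_thr}.

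The main obstacle I anticipate is the computation over the fiber $\beta_{I_4}=1$: showing that the equation $\sqrt5\,\Delta + 125b^2 = 0$ on $\mathcal{B}$ cuts out precisely $30$ points each counted with multiplicity exactly $2$ (rather than, say, $60$ simple points or $15$ points of multiplicity $4$) requires genuinely using the defining relations of Bring's curve to control $\Delta$ as a function — the discriminant relation $\Delta^2 = 256a^5+3125b^4$ only determines $\Delta$ up to sign, and disentangling which sheet of the double cover $\mathcal{B}$ sits on, together with the local behavior of $\Delta$ at the ramification points, is the delicate part. A secondary subtlety is simply confirming $\deg \beta_{I_4}=30$ and that $a=0$ on $\mathcal{B}$ is not an identically-zero or spurious locus; I would handle both by passing to an explicit affine model of $\mathcal{B}/A_5 \cong \mathbb{CP}^1$ with coordinate (essentially) $\Delta/b^2$ and writing $\beta_{I_4}$ as an explicit rational function of that coordinate, after which all three ramification claims become a finite check on a degree-$30$ rational map $\mathbb{CP}^1\to\mathbb{CP}^1$ followed by pulling back along $\mathcal{B}\to\mathbb{CP}^1$.
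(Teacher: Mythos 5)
Your strategy --- verify the ramification data of the explicit formula directly and invoke uniqueness in Theorem \ref{existence_thr} --- is different from the paper's and could in principle be made to work, but as written it contains concrete errors that would derail the verification. First, the degree of $\beta_{I_4}$ is $60$, not $30$: the permutations $\sigma^2,\alpha$ live in $S_{60}$ (the ``$30$ edges'' are edges of the picture with white degree-$2$ vertices erased), and only with degree $60$ and passport $(5^{12},2^{30},5^{12})$ does Riemann--Hurwitz give $6=-120+48+30+48$. Second, your identification of the fibers is off. The fiber over $1$ is the locus $b=0$ (the $30$ points in the $S_5$-orbit of $(0:1:-1:i:-i)$, where $\beta_{I_4}-1=\tfrac{25b^2}{128a^5}(125b^2+\sqrt5\,\Delta)$ visibly vanishes to order $2$); the branch $125b^2+\sqrt5\,\Delta=0$ that you analyze instead forces $\Delta^2=3125b^4$, i.e.\ $a=0$, where the formula is $0/0$. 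The genuinely delicate point is not the fiber over $1$ but the locus $a=0$: by B\'ezout it consists of $24$ points (the $12$ black vertices and the $12$ face centers), and one must expand $\Delta=\pm25\sqrt5\,b^2\bigl(1+\tfrac{256a^5}{3125b^4}\bigr)^{1/2}$ locally to see that on one branch the numerator vanishes to order $10$ against $a^5$ in the denominator (zero of order $5$) while on the other it does not (pole of order $5$). Your sketch, which puts all of $\{a=0\}$ over $\infty$, would yield pole divisor of degree $120$ and is inconsistent. Third, even after the ramification data is confirmed, matching cycle types only fixes the passport, not the dessin; the uniqueness clause of Theorem \ref{existence_thr} goes from a dessin to its Belyi pair, so an extra argument is needed to single out $I_4$ among dessins with the same passport. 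Since the hard computations are all deferred (``I expect'', ``I would check''), the proposal is a plan with acknowledged and partly misplaced obstacles rather than a proof.

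For comparison, the paper avoids all of this by starting from the known Belyi function $\beta_H=\tfrac{256a^5}{256a^5+3125b^4}$ for $H=I_4\cup I_4^*$ (Proposition \ref{prop_i4_i4*}) and the composition identity $\beta_H=g\circ\beta_{I_4}$ with $g(y)=\tfrac{4y}{(y+1)^2}$ (Remark \ref{prop_map_with_dual}). Solving the resulting quadratic for $\beta_{I_4}$ gives two candidates $\beta_{1,2}$ differing by the sign of $\sqrt{5(256a^5+3125b^4)}=\sqrt5\,\Delta$, and the odd permutation $(12)\in\mathrm{Aut}(\mathcal B)$ interchanges them, so either is a Belyi function for $I_4$. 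If you want to salvage a direct verification, the cleanest repair is to carry out the local analysis of $\Delta$ at $a=0$ sketched above and then identify the dessin via its automorphism group rather than via cycle types alone.
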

To prove Proposition \ref{prop_beta_i4} we use Proposition \ref{prop_i4_i4*} and Remark \ref{prop_map_with_dual} discussed in \cite{Zvonkin}.

\noindent\textit{Notation.} Let $M$ be a dessin. By $M^*$ we denote a dessin dual to $M$, and $M \cup M^*$ stands for the original dessin $M$ and its dual $M^*$ on the same picture.
\begin{prop}\label{prop_i4_i4*}\cite{Zvonkin}
    A Belyi function for $H = I_4 \cup I_4^*$ is
     $$\beta_H(x_1:x_2:x_3:x_4:x_5) = \frac{256a^5}{256a^5 + 3125b^4},$$
    where $a = \sum\limits_{1 \leqslant i < j < k < s \leqslant 5}^5 x_i x_j x_k x_s, \quad b = - x_1x_2x_3x_4x_5$.
\end{prop}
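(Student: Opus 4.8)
The plan is to verify, via Riemann's existence theorem (Theorem \ref{existence_thr}), that the explicit function $\beta_H$ carries exactly the ramification data of the superposition dessin $H = I_4 \cup I_4^*$, and then to match its three special fibres to the vertices, the edge-crossings and the faces of that superposition. Throughout I use that $\mathcal{B}$ lies in the hyperplane $\{p_1 = 0\} \cong \mathbb{CP}^3$ and is there cut out by the quadric $p_2 = 0$ and the cubic $p_3 = 0$, so that $\mathcal{B}$ is a complete-intersection curve of degree $6$; that $a = e_4$ and $b = -e_5$ are the (symmetric, hence $S_5$-invariant) forms of degrees $4$ and $5$ for which the monic quintic with roots $x_1, \dots, x_5$ is $x^5 + ax + b$; and that $256a^5 + 3125b^4$ is precisely the discriminant of this quintic. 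The three relevant conditions are then read off immediately: $\beta_H = 0 \Leftrightarrow a = 0$, $\beta_H = 1 \Leftrightarrow b = 0$, and $\beta_H = \infty \Leftrightarrow 256a^5 + 3125b^4 = 0$, i.e. the quintic has a repeated root.

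First I would compute the degree. Since $a = b = 0$ forces all coordinates to vanish, the two degree-$20$ forms $a^5$ and $256a^5 + 3125b^4$ have no common zero on $\mathcal{B}$, so they define a base-point-free map to $\mathbb{CP}^1$ of degree $\deg \mathcal{O}_{\mathcal{B}}(20) = 20 \cdot 6 = 120$; hence $\deg \beta_H = 120$. Next I would locate each special fibre and determine its ramification by an orbit count under $\mathrm{Aut}(\mathcal{B}) = S_5$. For $a = 0$ the quintic is $x^5 + b$, so the coordinates are the five fifth-roots of $-b$; the corresponding projective point $(1 : \zeta : \zeta^2 : \zeta^3 : \zeta^4)$ with $\zeta = e^{2\pi i/5}$ has cyclic stabiliser of order $5$, giving an $S_5$-orbit of $120/5 = 24$ points. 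For $b = 0$ exactly one coordinate vanishes and the remaining four are fourth-roots of a common number, so the orbit of $(1 : i : -1 : -i : 0)$ has stabiliser of order $4$ and size $30$. For the discriminant locus exactly one pair of coordinates coincides, giving an orbit of $60$ points. Because $24 \cdot 5 = 30 \cdot 4 = 60 \cdot 2 = 120$ matches the degree, the zeros of $a$ and of $b$ must all be simple and the discriminant must vanish to order exactly $2$; thus $\beta_H$ has ramification index $5$ over $0$, $4$ over $1$, and $2$ over $\infty$.

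With these multiplicities I would confirm that $\beta_H$ is a Belyi function by Riemann--Hurwitz: the identity $2\cdot 4 - 2 = -2\cdot 120 + 24(5-1) + 30(4-1) + 60(2-1) = 6$ holds exactly, which leaves no room for critical points outside $\{0,1,\infty\}$. Hence all critical values lie in $\{0,1,\infty\}$ and $(\mathcal{B}, \beta_H)$ is a Belyi pair whose dessin $\beta_H^{-1}([0,1])$ has $24$ black vertices of degree $5$, $30$ white vertices of degree $4$ and $60$ faces of degree $2$ on a genus-$4$ surface.

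Finally I would identify this dessin with $I_4 \cup I_4^*$. Since $\beta_H \circ \varphi = \beta_H$ for every $\varphi \in S_5$, the group $S_5$ acts by automorphisms of the dessin, and I would decompose the three loci under the subgroup $A_5 = \mathrm{Aut}(I_4)$: the $24$ black vertices split into two $A_5$-orbits of $12$, which I would match with the $12$ vertices of $I_4$ and the $12$ vertices of the dual $I_4^*$ (the face-centres of $I_4$); the $30$ white vertices form a single $A_5$-orbit that I would identify with the $30$ points where an edge of $I_4$ crosses the corresponding dual edge of $I_4^*$; and the $60$ degree-$2$ faces complete the overlay. Matching these orbits with the combinatorial model of $I_4 \cup I_4^*$ and invoking the uniqueness clause of Theorem \ref{existence_thr} then yields the claim. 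The main obstacle is precisely this last step: the numerical passport $\langle 5^{24}, 4^{30}, 2^{60}\rangle$ alone does not pin down a dessin, so the identification must exploit the full $S_5$-symmetry together with the explicit pentagon, fourth-root and coincidence descriptions of the fibres to reconstruct the monodromy of the overlay, and one must check that the $A_5$-orbit of pentagon points genuinely separates into the primal and dual vertex sets rather than fusing them.
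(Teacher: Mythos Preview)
The paper does not give its own proof of this proposition: it is quoted verbatim from \cite{Zvonkin} and used as a black box to derive $\beta_{I_4}$ via Remark~\ref{prop_map_with_dual}. So there is no ``paper's proof'' to compare against; your write-up is the only argument on the table.

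On substance, your outline is sound and essentially complete through the Riemann--Hurwitz check. The degree computation via $\deg\mathcal{O}_{\mathcal{B}}(20)=120$ is correct, the identification of $256a^5+3125b^4$ with the discriminant of $x^5+ax+b$ is correct, and your orbit counts ($24$, $30$, $60$) with multiplicities ($5$, $4$, $2$) are right and saturate both the degree and Riemann--Hurwitz, which indeed forces all critical values into $\{0,1,\infty\}$. One small point you glide over: you should note that no point of $\mathcal{B}$ has three equal coordinates or two disjoint equal pairs, since otherwise the discriminant fibre could not consist of exactly $60$ double points; this follows because the three power-sum equations with $x_1=x_2=x_3$ (resp.\ $x_1=x_2,\ x_3=x_4$) are an overdetermined system in $\mathbb{CP}^2$ with no solution.

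You correctly flag the real gap: the passport $\langle 5^{24},4^{30},2^{60}\rangle$ does not determine the dessin. The cleanest way to close it, and one that avoids reconstructing monodromy by hand, is to observe that $\beta_H$ has degree $120=|S_5|$ and is $S_5$-invariant, hence it \emph{is} the quotient map $\mathcal{B}\to\mathcal{B}/S_5\cong\mathbb{CP}^1$. On the other side, the combinatorial description of $I_4$ by $(\sigma^2,\alpha)$ shows that $\mathrm{Aut}(I_4\cup I_4^*)$ has order $120$ (the $A_5$ acting on $I_4$ together with an involution exchanging $I_4$ with its isomorphic dual), so the Belyi function of $I_4\cup I_4^*$ is also the $S_5$-quotient map. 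Two such maps differ only by a M\"obius transformation of the target, and matching which of the three special fibres is the vertex set, the edge-crossing set, and the face set then pins $\beta_H$ down. Note, however, that if you want to argue this way you must establish $\mathrm{Aut}(I_4\cup I_4^*)=S_5$ \emph{combinatorially}, not by invoking the explicit $\beta_{I_4}$ the paper derives later, or you will be circular.
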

\begin{remark}\label{prop_map_with_dual}\cite{Zvonkin}
    Let $\beta$ be a Belyi function for a dessin $M$, then the Belyi function for $M \cup M^*$ is $g \circ \beta$, where $g(y) = \frac{4y}{(y+1)^2}$.
\end{remark}
\begin{proof}[Proof of Proposition \ref{prop_beta_i4}]
    By Remark \ref{prop_map_with_dual}, $\beta_H = g \circ \beta_{I_4}$, where $\beta_{I_4}$ is a Belyi function for $I_4$, which we want to compute.
    \begin{gather*}
	    \beta_H = \frac{4\beta}{(\beta+1)^2}\quad\Rightarrow\quad \beta_{1,2}(x) = 2\cdot\frac{1\pm\sqrt{1-\beta_H(x)}}{\beta_H(x)} - 1
    \end{gather*}
    Substituting $\beta_H$ into $\beta_{1,2}$, we obtain
    $$\beta_{1,2}(x) = 1 + \frac{25b^2}{128a^5}\cdot(125b^2 \pm \sqrt{5(256a^5 + 3125b^4)}).$$
    Note that $256a^2 + 3125b^2$ is the discriminant of a polynomial $p(x)$ of the form (\ref{eq_p}). Thus,
    $$\beta_{1,2}(x) = 1 + \frac{25b^2}{128a^5}\cdot(125b^2 \pm \sqrt{5}\prod\limits_{i<j}(x_i - x_j)).$$
    Belyi pairs $(\mathcal{B}, \beta_1)$ and $(\mathcal{B}, \beta_2)$ correspond to the same dessin because there exists an automorphism $\varphi$ of Bring's curve $\mathcal{B}$ such that
    $$\beta_1 = \beta_2\circ\varphi.$$
    For example, you can take $\varphi = (12) \in \text{Aut}(\mathcal{B})$. Thus, both functions $\beta_1, \beta_2$ can be considered as a Belyi function for $I_4$.
\end{proof}
\begin{corollary}
    $I_4$ and $I_4^*$ are isomorphic.
\end{corollary}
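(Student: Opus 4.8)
The plan is to combine the two–sheeted structure of the auxiliary map $g(y)=\tfrac{4y}{(y+1)^2}$ from Remark~\ref{prop_map_with_dual} with the automorphism $\varphi=(12)\in\mathrm{Aut}(\mathcal B)$ exhibited in the proof of Proposition~\ref{prop_beta_i4}. The point already available is the nontrivial identity $\beta_1=\beta_2\circ\varphi$ with $\varphi\in\mathrm{Aut}(\mathcal B)$; from it the corollary should follow once we understand which dessin each branch $\beta_i$ carries.

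First I would record the symmetry $g(1/y)=g(y)$; equivalently, the product of the two roots (in $y$) of the quadratic $g(y)=z$ equals $1$. Hence the two branches $\beta_1,\beta_2$ of $g^{-1}\!\circ\beta_H$ found in the proof of Proposition~\ref{prop_beta_i4} satisfy $\beta_1\beta_2\equiv 1$, so $\beta_2=1/\beta_1$, where $\beta_1=\beta_{I_4}$ is the function of Proposition~\ref{prop_beta_i4}. (This can also be verified directly from the explicit formulas, using that the square of $\prod_{i<j}(x_i-x_j)$ is the discriminant of $p$.)

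Next I would identify the dessin of $\beta_2=1/\beta_{I_4}$ as the dual $I_4^{*}$. Since $y\mapsto 1/y$ is a holomorphic automorphism of $\mathbb{CP}^1$ that fixes $1$ and interchanges $0$ and $\infty$, one gets $(1/\beta_{I_4})^{-1}([0,1])=\beta_{I_4}^{-1}([1,\infty])$; this subgraph has its black vertices at the poles of $\beta_{I_4}$ (the face centres of $I_4$), its white vertices at the white vertices of $I_4$, and its faces at the black vertices of $I_4$ — that is, it is exactly the combinatorial dual $I_4^{*}$. This is consistent with Remark~\ref{prop_map_with_dual}: $g^{-1}([0,1])$ is the real arc $[0,\infty]$, so $\beta_H^{-1}([0,1])=\beta_{I_4}^{-1}([0,1])\cup\beta_{I_4}^{-1}([1,\infty])=I_4\cup I_4^{*}$. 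Thus $\beta_1$ is a Belyi function for $I_4$ and $\beta_2$ is a Belyi function for $I_4^{*}$.

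Finally, the proof of Proposition~\ref{prop_beta_i4} gives $\beta_1=\beta_2\circ\varphi$ with $\varphi=(12)$, so $\beta_1^{-1}([0,1])=\varphi^{-1}\big(\beta_2^{-1}([0,1])\big)$, and applying the same identity to $\{0\}$ and $\{1\}$ shows that $\varphi$ carries black vertices, white vertices and faces of the first dessin to those of the second. Hence $\varphi$ restricts to a colour–preserving isomorphism of dessins $I_4\xrightarrow{\;\sim\;}I_4^{*}$, which is the claim. (Equivalently: $\varphi$ is a lift along $\beta_{I_4}$ of the deck transformation $y\mapsto 1/y$ of the degree–two cover $g$, and that deck transformation visibly swaps $\beta_{I_4}^{-1}([0,1])$ and $\beta_{I_4}^{-1}([1,\infty])$.) The only step that needs genuine care is the middle one: making precise the dictionary \emph{``the involution $\beta\mapsto 1/\beta$ on Belyi functions realises the duality $M\mapsto M^{*}$ on clean dessins''}, i.e. checking that swapping the roles of $0$ and $\infty$ exchanges black vertices with faces while leaving the midpoint white vertices untouched; once this is granted, the corollary is immediate from what has already been established.
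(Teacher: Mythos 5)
Your proof is correct and follows essentially the same route as the paper: both arguments rest on showing $\beta_2=1/\beta_1$, on the fact that $1/\beta$ is a Belyi function for the dual dessin, and on the identity $\beta_1=\beta_2\circ(12)$ already established in the proof of Proposition~\ref{prop_beta_i4}. The only difference is cosmetic: you obtain $\beta_1\beta_2\equiv 1$ from the symmetry $g(1/y)=g(y)$ (the product of the roots of the quadratic $g(y)=z$ being $1$), whereas the paper verifies $1/\beta_1=\beta_2$ by a direct computation in the variable $t=\left(\frac{5}{a}\right)^5\left(\frac{b}{4}\right)^4$.
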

\begin{proof}
    Note, both Belyi pairs $(\mathcal{B}, \beta_1)$ and $(\mathcal{B}, \beta_2)$ correspond to $I_4$ (see Proof of Proposition ~\ref{prop_beta_i4}). Suppose $\beta$ is a Belyi function for a dessin $M$, then $\frac{1}{\beta}$ is a Belyi function for $M^*$. We will check that 
    $$\frac{1}{\beta_1} = \beta_2.$$
    Indeed, 
    \begin{multline*}
        \beta_{1,2} = 2\frac{1 \pm \sqrt{1 - \frac{256a^5}{256a^5 + 3125b^4}}}{\frac{256a^5}{256a^5 + 3125b^4}} - 1 = 2\frac{1 \pm \sqrt{1 - \frac{4^4a^5}{4^4a^5 + 5^5b^4}}}{\frac{4^4a^5}{4^4a^5 + 5^5b^4}} - 1,\\
	    \frac{1 \pm \sqrt{1 - \frac{256a^5}{256a^5 + 3125b^4}}}{\frac{256a^5}{256a^5 + 3125b^4}} = \frac{4^4a^5 + 5^5b^4 \pm \sqrt{5^5b^4(4^4a^5+5^5b^4)}}{4^4a^5} = 1 + t \pm \sqrt{t(1+t)},
    \end{multline*}
    where $t = \left(\frac{5}{a}\right)^5\left(\frac{b}{4}\right)^4$. Hence,
    \begin{gather*}
        \beta_1 = 2 + 2t + 2 \sqrt{t(1+t)} - 1 = 1 + 2t + 2 \sqrt{t(1+t)},\\
        \frac{1}{\beta_1} = \frac{1}{1 + 2t + 2 \sqrt{t(1+t)}} = \frac{1 +2t - 2\sqrt{t(1+t)}}{(1+2t)^2 - 4(t + t^2)} = 1 +2t - 2\sqrt{t(1+t)} = \beta_2.
    \end{gather*}
\end{proof}
\subsection{The Automorphism Group of $I_4$}
\begin{prop}
    The automorphism group Aut$(I_4)$ of the icosahedron $I_4$ of genus 4 is the alternating group $A_5$, which acts on Bring's curve $\mathcal{B}$ by even permutations of homogeneous coordinates.   
\end{prop}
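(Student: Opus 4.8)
The plan is to combine the explicit formula (\ref{eq_beta_i4}) for $\beta_{I_4}$ with the known description of $\text{Aut}(\mathcal{B})$. By Theorem \ref{existence_thr} the curve underlying the dessin $I_4$ is Bring's curve $\mathcal{B}$, and by the very definition of automorphisms of dessins
$$\text{Aut}(I_4) = \{\varphi \in \text{Aut}(\mathcal{B}) : \beta_{I_4}\circ\varphi = \beta_{I_4}\}.$$
Since $\text{Aut}(\mathcal{B}) = S_5$ acts by permuting the projective coordinates $x_1,\dots,x_5$, the whole problem reduces to deciding for which coordinate permutations the function (\ref{eq_beta_i4}) is invariant.

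First I would record that $a$ and $b$ in (\ref{eq_beta_i4}) are, up to sign, elementary symmetric polynomials in $x_1,\dots,x_5$ and hence $S_5$-invariant, so that the only non-symmetric ingredient of $\beta_{I_4}$ is the Vandermonde product $\Delta = \prod_{i<j}(x_i - x_j)$, which transforms by the sign character, $\Delta\circ\varphi = \text{sgn}(\varphi)\,\Delta$. Consequently $\beta_{I_4}\circ\varphi = \beta_{I_4}$ for every even $\varphi$, giving $A_5 \subseteq \text{Aut}(I_4)$, whereas for an odd $\varphi$ one gets $\beta_{I_4}\circ\varphi = 1 + \frac{25b^2}{128a^5}\bigl(125b^2 - \sqrt5\,\Delta\bigr)$, which is exactly the function $\beta_2$ appearing in the proof of Proposition \ref{prop_beta_i4}.

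It then remains to argue that $\beta_2 \neq \beta_{I_4}$ as meromorphic functions on $\mathcal{B}$ --- equivalently, that $\Delta$ is not identically zero on $\mathcal{B}$ and that $a,b$ are not identically $0$ there, so that the coefficient $\frac{25b^2}{128a^5}$ is a genuinely nonzero function. This is the only point requiring a moment's care, but it is easy: $\mathcal{B}$ is irreducible and is not contained in any of the diagonal hyperplanes $\{x_i = x_j\}$ nor in any $\{x_i = 0\}$ (a generic Bring quintic has five distinct nonzero roots), so $\Delta|_{\mathcal{B}} \not\equiv 0$ and $a|_{\mathcal{B}}, b|_{\mathcal{B}} \not\equiv 0$; alternatively one may simply recall that $\beta_1$ and $\beta_2$ were produced in that proof as the two distinct preimages of $\beta_H$ under $g(y) = \frac{4y}{(y+1)^2}$. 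Hence no odd permutation fixes $\beta_{I_4}$, and therefore $\text{Aut}(I_4) = A_5$, realized on $\mathcal{B}$ precisely by the even permutations of the homogeneous coordinates. I do not anticipate a serious obstacle; the genericity remark is what both pins the group down to $A_5$ exactly (rather than merely bounding it, e.g.\ by the centralizer of the monodromy group $\langle\sigma^2,\alpha\rangle$) and rules out the odd permutations.
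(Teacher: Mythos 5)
Your proposal is correct and follows essentially the same route as the paper: both reduce to the observation that $a$ and $b$ are symmetric while the Vandermonde product $\prod_{i<j}(x_i-x_j)$ is $A_5$-invariant and changes sign under odd permutations. The only difference is that you explicitly verify that the sign change actually alters $\beta_{I_4}$ as a function on $\mathcal{B}$ (non-vanishing of $\Delta$, $a$, $b$ on the irreducible curve), a point the paper's proof leaves implicit.
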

\begin{proof}
    Consider the Belyi function for $I_4$ 
    $$\beta_{I_4}(x)= 1 + \frac{25b^2}{128a^5}\cdot(125b^2 
        + \sqrt{5}\prod\limits_{1\leqslant i<j \leqslant 5}(x_i - x_j)).$$
    The polynomials $a$ and $b$ are symmetric. So, the product
    $$\prod\limits_{1\leqslant i<j \leqslant 5}(x_i - x_j)$$
    is invariant under even permutations and changes the sign under odd permutations. 
\end{proof}
\begin{remark}\label{rem_i0_i4}
    Considering the Great Dodecahedron as the immersion of the surface of genus 4 into $\mathbb{R}^3$ (Figure \ref{im_i4_r3}), we can visualize automorphisms of $I_4$ as automorphisms of a regular icosahedron. 
\end{remark}
\subsection{Vertices and Face-centres of $I_4$}
\begin{prop}\cite{Zvonkin}
    \begin{enumerate}
        \item The black vertices of $I_4$ are the points that can be obtained from the point $$(1: e^{\frac{2\pi i}{5}}:e^{\frac{4\pi i}{5}}:e^{\frac{6\pi i}{5}}: e^{\frac{8\pi i}{5}})$$ by even permutations of coordinates. 
        \item The white vertices of $I_4$ are the points that can be obtained from the point $$(0: 1: -1: i: -i)$$ by permutations of coordinates.
        \item The face centers of $I_4$ are the points that can be obtained from the point $$(e^{\frac{2\pi i}{5}}:1:e^{\frac{4\pi i}{5}}:e^{\frac{6\pi i}{5}}: e^{\frac{8\pi i}{5}})$$ by even permutations of coordinates.
    \end{enumerate}
\end{prop}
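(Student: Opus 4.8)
The plan is to invoke Riemann's existence theorem (Theorem~\ref{existence_thr}): the black vertices are the zeros of $\beta_{I_4}$ (each of multiplicity $5$), the white vertices the points where $\beta_{I_4}=1$ (multiplicity $2$), and the face centres the poles (multiplicity $5$). Since $I_4$ has $12$ black vertices of degree $5$ this forces $\deg\beta_{I_4}=60$, so also $|\beta_{I_4}^{-1}(1)|=30$ and $|\beta_{I_4}^{-1}(\infty)|=12$. All computations I would carry out on $\mathcal{B}$ using two standard facts: Newton's identities turn \eqref{eqbring} into $e_1=e_2=e_3=0$, so a point of $\mathcal{B}$ is a projective $5$-tuple of roots of $x^5+ax+b$ with $a=e_4$, $b=-e_5$; and for this monic quintic $D^2=256a^5+3125b^4$, its discriminant, where $D=\prod_{i<j}(x_i-x_j)$ (the discriminant invoked in the proof of Proposition~\ref{prop_beta_i4}). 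Both $D/b^2$ and $a^5/b^4$ are homogeneous of degree $0$, hence functions on $\mathbb{C}P^4$, and $D/b^2$ changes sign under odd permutations; moreover $A_5=\text{Aut}(I_4)$ fixes $\beta_{I_4}$ and hence its zero and pole divisors.

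\emph{Black vertices and face centres.} Writing \eqref{eq_beta_i4} as $\beta_{I_4}=N/(128a^5)$ with $N=128a^5+3125b^4+25\sqrt5\,b^2D$: a pole requires $a=0$, and if $a\neq0$ then $\beta_{I_4}=0$ forces $N=0$, whence, after isolating the $D$-term and squaring with $D^2=256a^5+3125b^4$, a contradiction of the shape $a^{10}=0$. Thus $\beta_{I_4}^{-1}(\{0,\infty\})\subseteq\mathcal{B}\cap\{a=0\}$. Every point of $\mathcal{B}$ with $a=0$ carries the quintic $x^5+b$, whose roots are $c,c\zeta,\dots,c\zeta^4$ with $\zeta=e^{2\pi i/5}$, so these points are exactly the permutations of $(1:\zeta:\zeta^2:\zeta^3:\zeta^4)$; as the $S_5$-stabiliser of this point is $\langle(12345)\rangle\subseteq A_5$, the locus $\mathcal{B}\cap\{a=0\}$ is one $S_5$-orbit of $24$ points, which breaks into two $A_5$-orbits of $12$. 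On this locus $D/b^2=\pm25\sqrt5$; substituting $a=0$ into $N$ shows $N=0$ on the orbit where the sign is $-$, and $N\neq0$ (with $128a^5=0$) on the orbit where it is $+$. So the $+$-orbit consists of poles, of order at least $5$ at each of its $12$ points; since the pole divisor is effective of degree $60$ and $A_5$-invariant it equals exactly $5$ times the $+$-orbit, so $\beta_{I_4}$ is pole-free on the $-$-orbit, and then the zero divisor --- effective of degree $60$, $A_5$-invariant, supported in $\mathcal{B}\cap\{a=0\}$ --- is $5$ times the $-$-orbit. Finally, pairing the factors of $\prod_{0\le k<l\le4}(\zeta^k-\zeta^l)$ with their complex conjugates evaluates this product to $-25\sqrt5$, so $(1:\zeta:\zeta^2:\zeta^3:\zeta^4)$ lies in the $-$-orbit. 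Hence it and its even permutations are the black vertices, and the odd permutations --- equivalently, the even permutations of $(12)\cdot(1:\zeta:\zeta^2:\zeta^3:\zeta^4)=(\zeta:1:\zeta^2:\zeta^3:\zeta^4)$ --- are the face centres.

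\emph{White vertices.} Clearing denominators in \eqref{eq_beta_i4}, $\beta_{I_4}=1$ is equivalent to $b^2(125b^2+\sqrt5D)=0$ with $a\neq0$ (the locus $a=0$ was handled above). The factor $125b^2+\sqrt5D$ cannot vanish with $a\neq0$: squaring with $D^2=256a^5+3125b^4$ forces $a^5=0$. So $\beta_{I_4}=1$ precisely when $b=0$, i.e.\ one coordinate vanishes and the remaining four are the roots of $x^4=-a$, which makes the point a permutation of $(0:1:-1:i:-i)$. Its $S_5$-stabiliser is the cyclic group of order $4$ generated by multiplying the four nonzero coordinates by $i$ and relabelling accordingly, so the orbit has $30$ points; the factor $b^2$ gives each multiplicity $2$, and $30\cdot2=60$ confirms these exhaust $\beta_{I_4}^{-1}(1)$.

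The step I expect to be delicate is the behaviour of \eqref{eq_beta_i4} along $\{a=0\}$, where the formula is a $c/0$ or $0/0$ expression: one cannot simply read off the value pointwise and must combine the squaring argument with the global degree $60$ to be sure that exactly $12$ of the $24$ points with $a=0$ are genuine zeros of multiplicity $5$ and the other $12$ genuine poles, rather than removable points with a finite nonzero value. A secondary nuisance is pinning down the sign of $\prod_{k<l}(\zeta^k-\zeta^l)$ against the branch of $\sqrt5$ appearing in \eqref{eq_beta_i4}, which is all that distinguishes the ``black vertices'' orbit from the ``face centres'' orbit; this can alternatively be finessed via the Corollary that $I_4\cong I_4^*$, under which the two orbits are interchanged both by an odd permutation and by $\beta\mapsto1/\beta$.
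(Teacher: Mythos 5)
Your argument is correct, and it is worth noting that the paper itself supplies no proof here: the proposition is imported verbatim from \cite{Zvonkin}, so there is nothing to compare against except the surrounding machinery (the explicit formula \eqref{eq_beta_i4} and Theorem \ref{existence_thr}), which is exactly what you use. Your verification checks out in detail: writing $\beta_{I_4}=N/(128a^5)$ with $N=128a^5+3125b^4+25\sqrt5\,b^2D$ and squaring against $D^2=256a^5+3125b^4$ does reduce both ``$N=0$, $a\neq0$'' and ``$125b^2+\sqrt5D=0$, $a\neq0$'' to $a^5=0$ (the first gives $16384\,a^{10}=0$, the second $1280\,a^5=0$), so the fibre over $1$ is exactly $\{b=0\}$ (the $S_5$-orbit of $(0:1:-1:i:-i)$, stabiliser $\mathbb{Z}_4$, $30$ points of multiplicity $2$) and the fibres over $0$ and $\infty$ sit inside $\{a=0\}$, which is one $S_5$-orbit of $24$ points splitting into two $A_5$-orbits of $12$ distinguished by the sign of $D/b^2=\pm25\sqrt5$. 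You are right that the only delicate point is the $0/0$ indeterminacy of \eqref{eq_beta_i4} along $\{a=0\}$, and your resolution --- pole order at least $5\,\mathrm{ord}_p(a)\ge5$ at each of the $12$ points where $N\neq0$, combined with $\deg\beta_{I_4}=60$ and $A_5$-invariance of the zero and pole divisors --- is the correct way to close it. The sign computation $\prod_{k<l}(\zeta^k-\zeta^l)=-25\sqrt5$ (pairing conjugate factors) also checks out, so $(1:\zeta:\zeta^2:\zeta^3:\zeta^4)$ is indeed a zero of the ``$+$'' branch $\beta_1$; as you observe, this last sign is the only thing separating items 1 and 3, and the residual ambiguity in the choice of branch is harmless because the two $A_5$-orbits are exchanged by the odd permutation $(12)$, consistently with the corollary $I_4\cong I_4^*$. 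In short: a complete, self-contained proof that the paper does not otherwise contain.
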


\section{Diagram of Belyi function for $I_4$ decompositions}
\textit{Notation.} Let $M$ be a dessin, let $X$ be the corresponding Riemann surface, and let $G$ be a subgroup in Aut$(M)$. We identify points $z_1, z_2 \in X$ if there exists $f \in G \subset \text{Aut}(M)$ such that $z_1 = f(z_2)$ and obtain the quotient curve $X/_{G}$ and the quotient dessin $M/_{G}$. 

\textit{Relations between quotients.} We now consider the various relationships we might expect between the quotient dessins of $I_4$. Let us consider rotational symmetries $\mathbb{Z}_n$ of $I_4$. If $\mathbb{Z}_n$ and $\mathbb{Z}_n\rtimes \mathbb{Z}_m$ are subgroups of Aut$(I_4)$ then we have commutativity of the following diagram of quotients:
\[
    \begin{tikzcd}
        I_4 \arrow{r}{n} \arrow[swap]{dr}{\mathbb{Z}_n\rtimes \mathbb{Z}_m} & I_4/_{\mathbb{Z}_n} \arrow{d}{m} \\ & I_4/_{\mathbb{Z}_n\rtimes \mathbb{Z}_m}
    \end{tikzcd}
\]
where the arrows are labelled by the symmetry being quotient. Here $n$ and $m$ stand for $\mathbb{Z}_n$ and $\mathbb{Z}_m$ respectively.

Nontrivial subgroups of $A_5$ up to automorphisms:
\begin{itemize}
    \item $\mathbb{Z}_2$,
    \item $V_4 = \mathbb{Z}_2 \times \mathbb{Z}_2$,
    \item $A_4 = V_4 \rtimes \mathbb{Z}_3$,
    \item $A_3 = \mathbb{Z}_3$,
    \item $S_3 = \mathbb{Z}_3\rtimes \mathbb{Z}_2$,
    \item $\mathbb{Z}_5$,
    \item $D_{10} = \mathbb{Z}_5 \rtimes \mathbb{Z}_2$.
\end{itemize}
\begin{figure}
    \centering
    \includegraphics[scale=0.5]{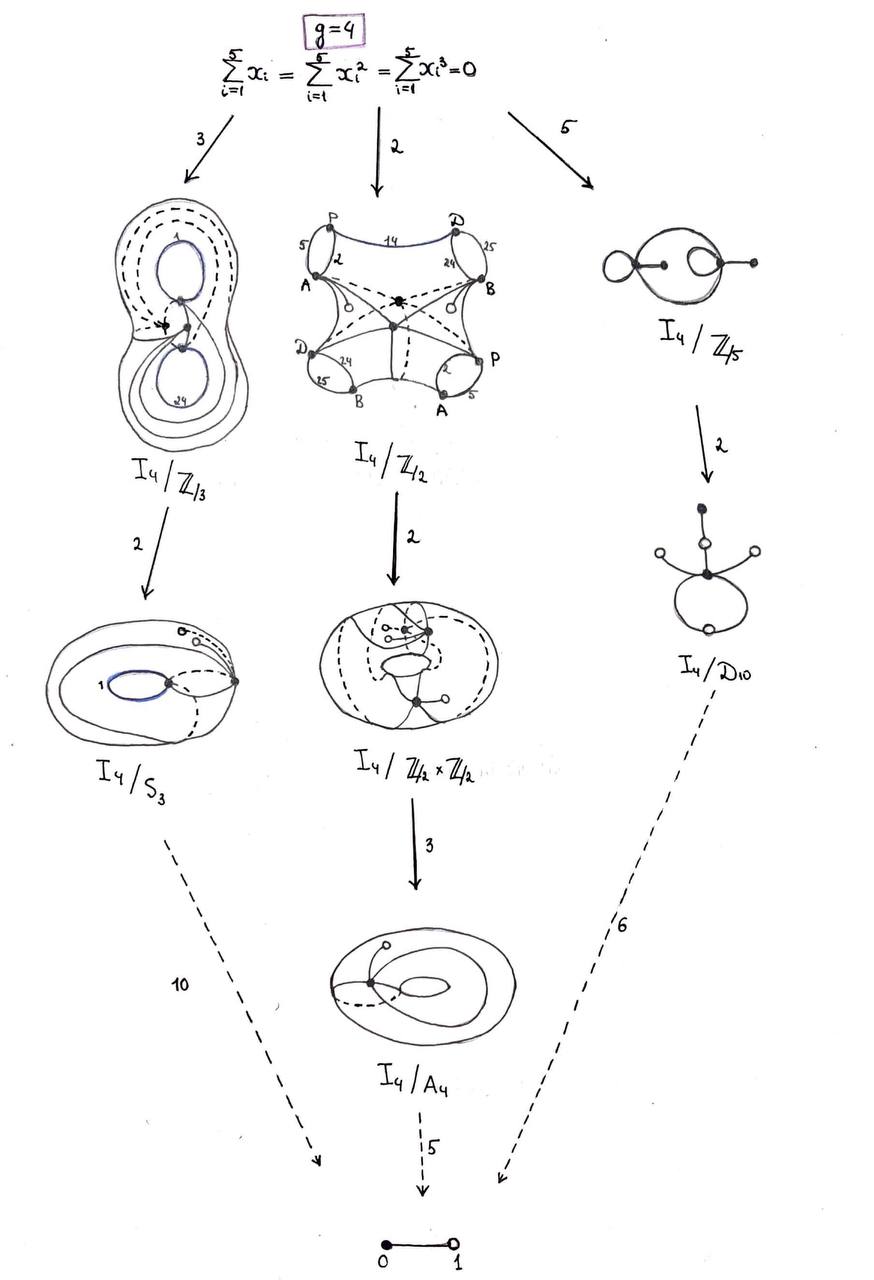}
    \caption{Diagram of Belyi function for $I_4$ decompositions}
    \label{gensch}
\end{figure}
\begin{figure}
    \centering
    \includegraphics[scale=0.7]{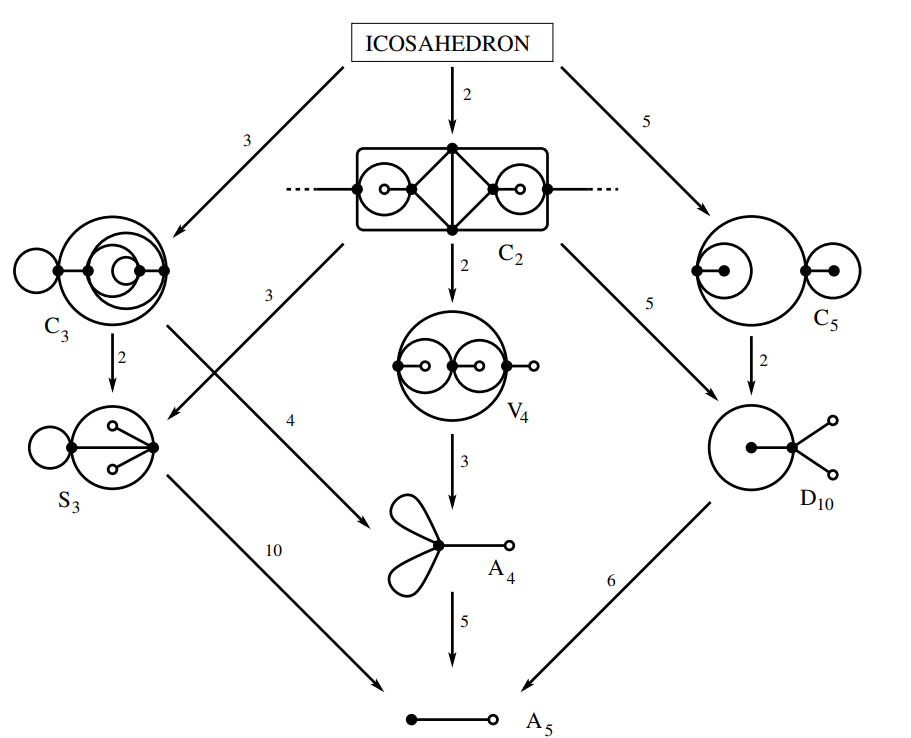}
    \caption{Diagram of Belyi function for a regular icosahedron decompositions}
    \label{im_i0}
\end{figure}
\begin{figure}
    \centering
    \includegraphics[scale=0.5]{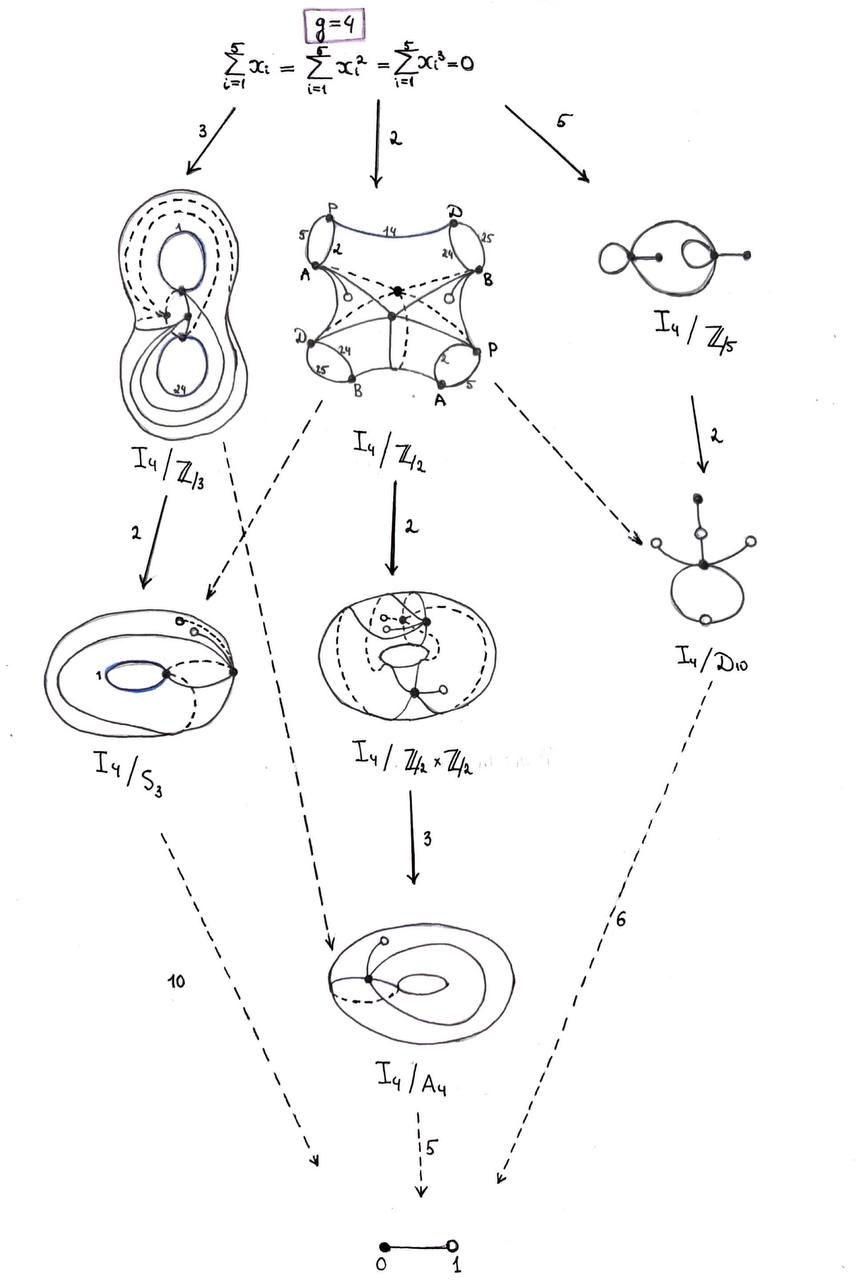}
    \caption{Diagram of Belyi function for $I_4$ decompositions}
    \label{full_gensch}
\end{figure}
In Figure \ref{gensch} you can see all quotient dessins $\{I_4/_{G}\: | \;  G \subset \text{Aut}(I_4)\}.$ There is an arrow from  $I_4/_{G_1}$ to $I_4/_{G_2}$ if $I_4/_{G_2}$ is a quotient of $I_4/_{G_1}$ by a rotational symmetry of some order. An order of а symmetry is placed near the corresponding arrow. In fact, the diagram of a Belyi function for $I_4$ decompositions has more arrows then the diagram shown in Figure \ref{gensch}. If a dessin has no symmetries, it does not mean that the corresponding Belyi function is not decomposable. There exist so called hidden symmetries of dessins \cite{Zvonkin}. 

The diagram of a Belyi function for a regular icosahedron decompositions is obtained by A.K. Zvonkin (see Figure \ref{im_i0}). The main object of this paper is the diagram of a Belyi function for $I_4$ decompositions shown in Figure \ref{full_gensch}. The diagram for $I_4$ is more complicated. In case of a regular icosahedron all interjacent dessins are planar, while in case of $I_4$ dessins are embedded in different algebraic curves.

\section{Belyi pairs for $I_4/_{\mathbb{Z}_5}$ and $I_4/_{D_{10}}$}
\subsection{Belyi function for $I_4/_{\mathbb{Z}_5}$}
\begin{figure}[h!]
    \centering
    \includegraphics[scale=0.5]{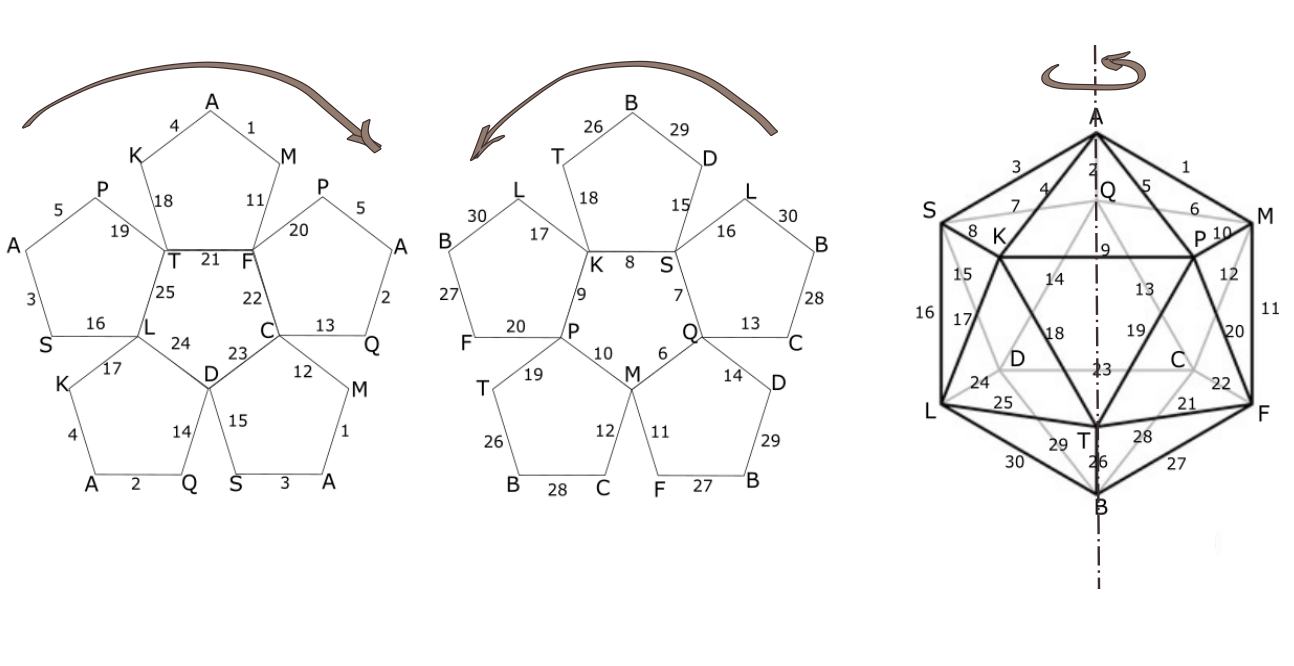}
    \caption{$\mathbb{Z}_5$ acting on the scanning of $I_4$}
    \label{romashka}
\end{figure}
\begin{figure}
    \centering
    \includegraphics[scale=0.5]{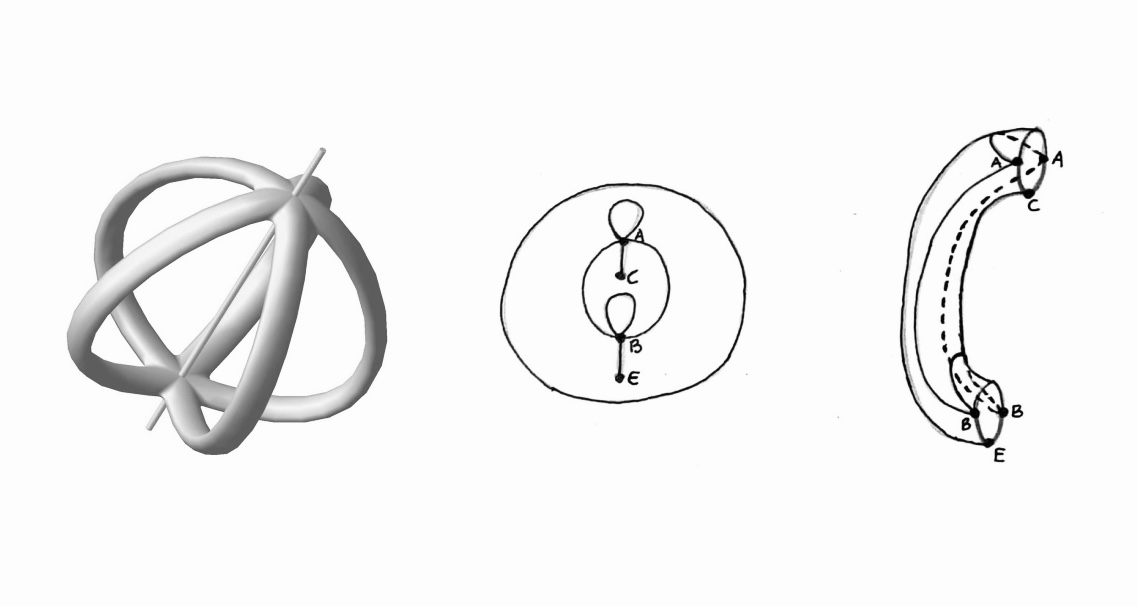}
    \caption{$I_4\rightarrow I_4/_{\mathbb{Z}_5}$}
    \label{im_i4_z5}
\end{figure}
The rotation group $\mathbb{Z}_5$ of a regular icosahedron about the axis shown in Figure \ref{romashka} corresponds to $\mathbb{Z}_5 \subset \text{Aut}(I_4)$ (see Remark \ref{rem_i0_i4}). Now we focus our attention on Figure \ref{im_i4_z5}. On the left you can see a surface of genus 4 which is Bring's curve $\mathcal{B}$. The group of rotations of the surface about the axis is isomorphic to $\mathbb{Z}_5$. The quotient of $I_4$ by the rotational symmetry of order 5 is shown in the middle. On the right there is one part of the surface of genus 4, which we call a leg, and a graph embedded in this leg. If we draw such graphs on each of 5 legs of the surface of genus 4, we will get an embedding of $I_4$ in Bring's curve $\mathcal{B}$.
\begin{prop}\cite{Shabat}
    A Beliy function for plain dessin $I_4/_{\mathbb{Z}_5}$ is equal to
    $$\beta_{I_4/_{\mathbb{Z}_5}}(x) = -\frac{1}{64}\frac{(x-1)^5(x+1)^5(x^2 - 4x -1)}{x^5(x^2+x-1)}.$$
\end{prop}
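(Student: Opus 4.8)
The plan is to produce the Belyi function for the quotient dessin $I_4/_{\mathbb{Z}_5}$ directly from its combinatorics, then pin down the free parameters by matching critical points and normalizing the coordinate on the target $\mathbb{CP}^1$. First I would read off the passport of $I_4/_{\mathbb{Z}_5}$: quotienting $I_4$ (which has $12$ black vertices of degree $5$, $30$ white vertices of degree $2$, $12$ pentagonal faces) by a $\mathbb{Z}_5$ rotation fixing an axis through two opposite black vertices and two opposite face-centres. The two black vertices on the axis are fixed and descend to ramification points of order $5$ over $0$; the remaining $10$ black vertices fall into two free orbits, giving two unramified black vertices of degree $5$ downstairs. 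Similarly the two axial face-centres give two poles of order $5$, the other $10$ faces give two simple poles downstairs; the $30$ white vertices of degree $2$ give $6$ white vertices of degree $2$ downstairs. So $\beta_{I_4/_{\mathbb{Z}_5}}$ has degree $12$, with zero-divisor of type $(5,5,1,1)$, pole-divisor of type $(5,5,1,1)$, and the preimage of $1$ of type $(2^6)$ — which is exactly the shape of the claimed rational function.

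Next I would write the general rational function with the prescribed zero and pole divisors. Choosing the coordinate $x$ on the quotient $\mathbb{CP}^1$ so that the axial black vertices sit at $x=1$ and $x=-1$ and the axial face-centres at $x=0$ and $x=\infty$, the function must have the form
$$
\beta(x) \;=\; C\,\frac{(x-1)^5(x+1)^5\,(x^2+px+q)}{x^5\,(x^2+rx+s)}
$$
for constants $C,p,q,r,s$, where $x^2+px+q$ has the two non-axial black vertices as roots and $x^2+rx+s$ has the two non-axial simple poles as roots. The conditions to impose are: (i) $\beta - 1$ must be a perfect square times a unit, i.e. the numerator of $\beta(x)-1$ is $C'\,(\text{degree-6 polynomial})^2$ — this is the statement that every point over $1$ has even local degree; and (ii) there are no other critical values, which is automatic once (i) holds by a Riemann–Hurwitz count (the ramification already used is $4\cdot(5-1)=16$ over $0$, $16$ over $\infty$, and $\beta-1$ contributing $6$ from the square; $16+16+6 = 38 = 2\cdot 12 - 2 + \#\{\text{sheets}\}$… I would verify the Euler-characteristic bookkeeping $2g-2 = -2 = 12\cdot(-2) + \sum(e_P-1)$ closes with genus $0$).

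The heart of the computation — and the step I expect to be the main obstacle — is solving condition (i): forcing $x^5(x^2+rx+s)\cdot C - (x-1)^5(x+1)^5(x^2+px+q)$ to be a constant times the square of a sextic. This is a system of polynomial equations in $C,p,q,r,s$ (comparing coefficients of a degree-$12$ polynomial with $(a_6x^6+\dots+a_0)^2$, i.e. $13$ equations, enough to solve for the $5+7=12$ unknowns $C,p,q,r,s,a_0,\dots,a_6$ up to the scaling ambiguity of the square root). I would solve this by hand or with a computer algebra system, using the extra freedom of an affine change of $x$ (already spent fixing $\pm1$ and $0,\infty$) and the symmetry of the picture — the $\mathbb{Z}_2$ that flips the axis should act on the quotient, suggesting a symmetry relating the two non-axial black orbits to the two non-axial faces and constraining $(p,q)$ versus $(r,s)$. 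Once the system is solved, I would check the solution against the asserted values $C=-\tfrac{1}{64}$, $x^2+px+q = x^2-4x-1$, $x^2+rx+s = x^2+x-1$, and confirm genus $0$ and the monodromy (transitivity of $\langle\sigma,\alpha\rangle$ for the quotient permutations) to be sure we have the right component of the solution variety rather than a Galois conjugate or a different dessin with the same passport. A final sanity check: $\beta_{I_4/_{\mathbb{Z}_5}}$ should fit into the commuting diagram, i.e. there should be a degree-$12$ map $\mathcal{B}\to\mathbb{CP}^1$ and a rational $h$ with $\beta_{I_4} = h\circ(\text{quotient map})$ reproducing the lattice in Figure~\ref{full_gensch}.
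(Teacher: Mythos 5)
The paper does not prove this proposition at all: it is quoted from the cited work of Shabat, so there is nothing to compare your argument against line by line. Your strategy --- read off the passport of the quotient, write the rational function with undetermined coefficients, force the numerator of $\beta-1$ to be a constant times the square of a sextic, and close the ramification count by Riemann--Hurwitz --- is the standard way such formulas are established, and it does work here: one checks that
$\beta(x)-1=-\tfrac{1}{64}\,(x^6-2x^5-5x^4-5x^2+2x+1)^2/\bigl(x^5(x^2+x-1)\bigr)$,
so the displayed function really has passport $[5^21^2;\,2^6;\,5^21^2]$. Your observation that the squareness condition plus Riemann--Hurwitz automatically excludes further critical values is also correct.

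Three concrete problems remain. First, a normalization slip: the axial black vertices, being fixed points of the $\mathbb{Z}_5$ rotation, are order-$5$ ramification points of the quotient map $\mathcal{B}\to\mathbb{CP}^1$, so the order-$5$ zero of $\beta_{I_4}$ there descends to a \emph{simple} zero downstairs; hence the axial vertices are the roots of $x^2-4x-1$, while $x=\pm1$ are the images of the two free orbits (the paper's treatment of $I_4/_{D_{10}}$ confirms that $\pm1$ are the degree-$5$ vertices and $0,\infty$ the degree-$5$ face centres). Placing the axial vertices at $\pm1$, as you propose, would produce a M\"obius-transformed variant rather than the stated formula. Second, your Riemann--Hurwitz bookkeeping is miscounted: each fiber over $0$ and over $\infty$ contributes $2\cdot(5-1)=8$, not $16$, and the correct total $8+8+6=22=2\cdot12-2$ does close for genus $0$. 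Third, and most importantly, the entire content of the proof --- solving the coefficient system and then verifying that the resulting dessin has the monodromy of $I_4/_{\mathbb{Z}_5}$ rather than that of another dessin with the same passport or a Galois conjugate --- is deferred. You flag this identification step, which is the right instinct, but until it is carried out (e.g.\ by computing the quotient permutation pair and matching it to the monodromy of the exhibited function, or by exhibiting the factorization of $\beta_{I_4}$ through the $\mathbb{Z}_5$ quotient map) the argument is a plan rather than a proof.
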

\subsection{Belyi function for $I_4/_{D_{10}}$}
\begin{figure}[h]
    \centering
    \includegraphics[scale=0.12]{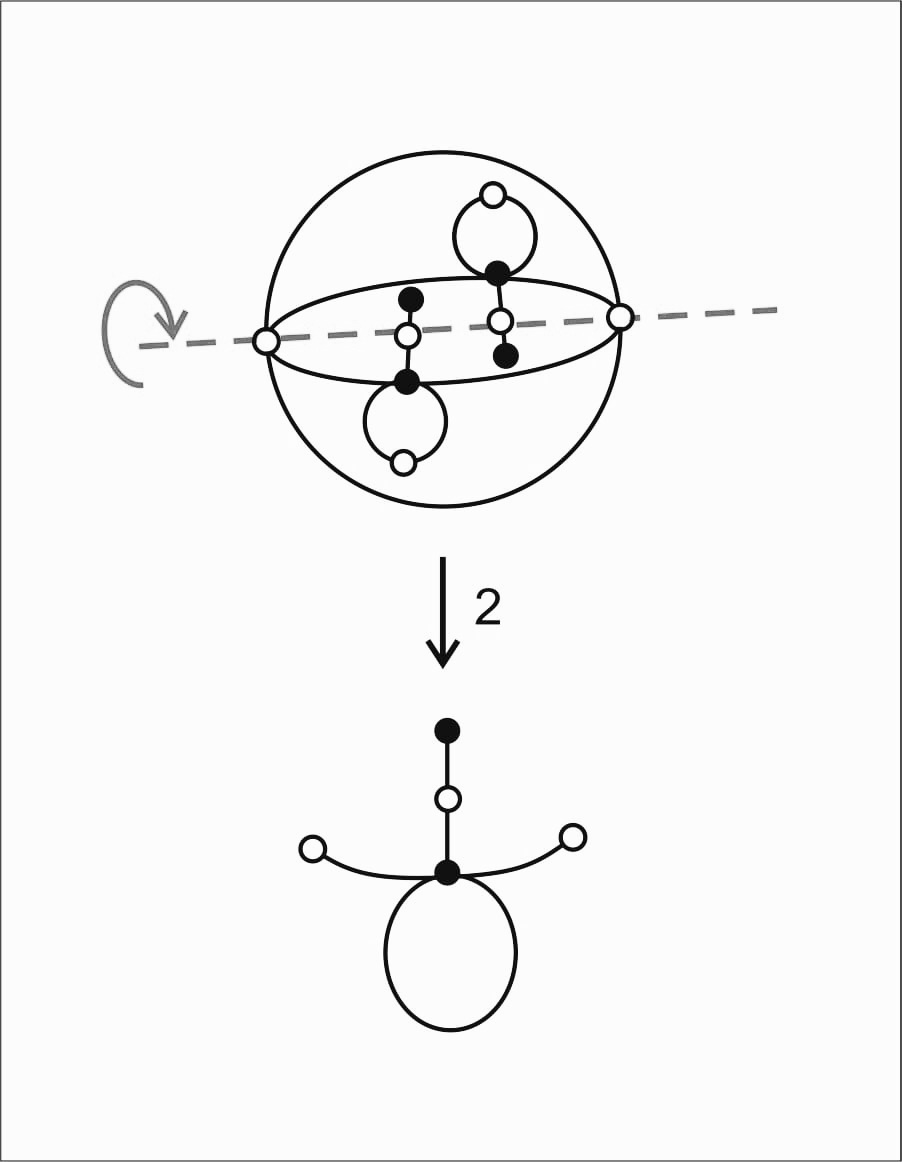}
    \caption{$I_4/_{\mathbb{Z}_5} \rightarrow I_4/_{D_{10}}$}
    \label{im_i4_d10}
\end{figure}
\begin{prop} 
    A Belyi function for $I_4/_{D_{10}}$ (Figure \ref{im_i4_d10}) is equal to
    \begin{equation*}
        \beta_{I_4/_{D_{10}}}(x) = \frac{(x-1)^5((2+i)x+(2-i))}{(x+1)^5((2+i)x -(2-i))}.
    \end{equation*}
\end{prop}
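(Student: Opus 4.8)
The plan is to construct the Belyi function for $I_4/_{D_{10}}$ as a rational function on $\mathbb{CP}^1$ that factors through the already-known Belyi function for $I_4/_{\mathbb{Z}_5}$. Concretely, since $D_{10} = \mathbb{Z}_5 \rtimes \mathbb{Z}_2$, the quotient map $I_4 \to I_4/_{D_{10}}$ factors as $I_4 \to I_4/_{\mathbb{Z}_5} \to I_4/_{D_{10}}$, where the second arrow is a degree-$2$ quotient by the residual $\mathbb{Z}_2$-action. So I would first identify the involution $\iota$ on $\mathbb{CP}^1$ (the target of $\beta_{I_4/_{\mathbb{Z}_5}}$) induced by the order-$2$ element of $D_{10}$: reading off the combinatorics of Figure~\ref{im_i4_d10}, the dessin $I_4/_{\mathbb{Z}_5}$ has a symmetry swapping certain black vertices, white vertices and faces in pairs, and $\iota$ must realize this. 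From the formula $\beta_{I_4/_{\mathbb{Z}_5}}(x) = -\tfrac{1}{64}\tfrac{(x-1)^5(x+1)^5(x^2-4x-1)}{x^5(x^2+x-1)}$ one sees the black vertices sit at $x = \pm 1$ (order $5$) and the roots of $x^2-4x-1$; the poles (face centres) are at $x=0$ (order $5$), $x=\infty$, and the roots of $x^2+x-1$. A natural candidate involution pairing $x=1$ with $x=-1$ and $x=0$ with $x=\infty$ is a Möbius transformation, and I would pin it down by requiring compatibility with all the pairings dictated by the dessin.

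Next I would write the degree-$2$ map $q: \mathbb{CP}^1 \to \mathbb{CP}^1$ quotienting by $\langle \iota \rangle$ — if $\iota(x) = \tfrac{\mu x + \nu}{\rho x - \mu}$ then $q$ can be taken to be a symmetric rational function of $x$ and $\iota(x)$, e.g. a suitable combination like $x + \iota(x)$ or $x\,\iota(x)$ normalized to a convenient coordinate. Then $\beta_{I_4/_{D_{10}}}$ is characterized by the factorization $\beta_{I_4/_{\mathbb{Z}_5}} = (\text{some rational }h) \circ q$ up to post-composition, but more directly: $\beta_{I_4/_{D_{10}}} \circ q$ must equal a Möbius transform of $\beta_{I_4/_{\mathbb{Z}_5}}$ (the two Belyi functions on $I_4/_{\mathbb{Z}_5}$ and $I_4/_{D_{10}}$ differ by the branched cover $q$ followed by adjusting which fibres are $0,1,\infty$). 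I would therefore determine $\beta_{I_4/_{D_{10}}}$ by imposing: it is a rational function of the quotient coordinate of degree equal to (degree of $I_4/_{D_{10}}$ as a dessin), with prescribed ramification $0 \mapsto$ black vertices, $1 \mapsto$ white vertices, $\infty \mapsto$ face centres, the multiplicities being read from Figure~\ref{im_i4_d10}. The proposed answer $\tfrac{(x-1)^5((2+i)x+(2-i))}{(x+1)^5((2+i)x-(2-i))}$ has a black vertex of order $5$ at $x=1$ and a simple one at $x = -\tfrac{2-i}{2+i}$, a pole of order $5$ at $x=-1$ and a simple one at $x=\tfrac{2-i}{2+i}$, so the dessin has degree $6$ with passport $(5,1)$ for both black vertices and faces; I would verify the white-vertex data by checking that $\beta_{I_4/_{D_{10}}} - 1$ has the corresponding zero structure.

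The cleanest way to actually nail the constants is to exhibit an explicit change of variables $x = \varphi(t)$ (Möbius in $t$, the $I_4/_{\mathbb{Z}_5}$-coordinate) realizing $q$, substitute into $\beta_{I_4/_{\mathbb{Z}_5}}$, and check that the result equals a Möbius transform of the claimed $\beta_{I_4/_{D_{10}}}$; equivalently, verify directly that the claimed function, pulled back along $q$, reproduces $\beta_{I_4/_{\mathbb{Z}_5}}$ up to the relabelling of $\{0,1,\infty\}$. I would also cross-check using the permutation description: compute $(\sigma,\alpha)$ for $I_4/_{D_{10}}$ from the $D_{10}$-action on the $30$ edges of $I_4$ (six orbits), confirm the group they generate is transitive (Proposition~\ref{triple_perm}) and that $(\sigma\alpha)^{-1}$ has the predicted cycle type, then invoke uniqueness in Riemann's existence theorem (Theorem~\ref{existence_thr}) to conclude the explicit rational function is \emph{the} Belyi function.

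I expect the main obstacle to be correctly identifying the induced involution $\iota$ on the $I_4/_{\mathbb{Z}_5}$-sphere and the resulting quotient coordinate: the $\mathbb{Z}_2$ in $D_{10}$ does not act on the coordinate $x$ by something as obvious as $x \mapsto -x$ or $x \mapsto 1/x$ (those do not preserve the zero/pole pattern of $\beta_{I_4/_{\mathbb{Z}_5}}$), and the appearance of $i$ in the final formula signals that the correct Möbius involution has complex coefficients — so the bookkeeping of which pairs of vertices/faces get identified, and hence the precise normalization fixing the $2\pm i$ factors, is where care is needed. Once $\iota$ and $q$ are in hand, the remaining verification is a routine (if slightly tedious) rational-function identity, pinned down uniquely by Theorem~\ref{existence_thr}.
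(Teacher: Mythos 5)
Your proposal follows essentially the same route as the paper: identify the Möbius involution on the $I_4/_{\mathbb{Z}_5}$-sphere induced by the extra $\mathbb{Z}_2$ (it is simply $f(x)=-\tfrac1x$, which realizes the pairings $1\leftrightarrow -1$, $0\leftrightarrow\infty$ and swaps the simple zeros and poles; the $i$'s in the final formula come from its fixed points $\pm i$ and the conjugation moving them to $0,\infty$, not from complex coefficients of $f$), conjugate it to $y\mapsto -y$, and push $\beta_{I_4/_{\mathbb{Z}_5}}$ down through the quotient map $y\mapsto y^2$. One small correction to your setup: since an automorphism $\varphi$ of a dessin satisfies $\beta\circ\varphi=\beta$ by definition, the Belyi function descends \emph{exactly} through the quotient ($\beta_{I_4/_{\mathbb{Z}_5}}=\beta_{I_4/_{D_{10}}}\circ q$), so no post-composition with a Möbius transform or relabelling of $\{0,1,\infty\}$ is needed.
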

\begin{proof}
    Let $f \in \text{Aut}(I_4/_{\mathbb{Z}_5})$ be an involution such that a quotient of $I_4/_{\mathbb{Z}_5}$ by the action of $f$ is $I_4/_{D_{10}}$. As Figure \ref{im_i4_d10} shows, $f$ maps one black vertex $x = 1$ of degree 5 to another black vertex $x = -1$ of degree 5:
    \begin{gather*}
        f(1) = -1, \quad f(-1) = 1.
    \end{gather*}
    Also, $f$ swaps centers of faces of degree 5:
    \begin{gather*}
        f(0) = \infty, \quad f(\infty) = 0.
    \end{gather*}
    Linear fractional transformation which does the job is
    $$f(x) = -\frac{1}{x} \in \text{Aut}(I_4/_{\mathbb{Z}_5}).$$
    Note that there are two fixed points of $f$, namely $x = \pm i$. In Figure \ref{im_i4_d10} points $x = \pm i$ are shown as white vertices through which the axis of rotation passes.
    
    Consider a dessin $\widetilde{I_4/_{\mathbb{Z}_5}}$ corresponding to the Belyi pair $(\mathbb{CP}^1, \beta_{I_4/_{\mathbb{Z}_5}}\circ g),$ where 
    \begin{gather*}
        x = g(y) = \frac{y-i}{iy-1}\in \text{Aut}(\mathbb{CP}^1). \\
    \end{gather*}
    Thus, the Belyi function for $\widetilde{I_4/_{\mathbb{Z}_5}}$ is
    $$\beta_{I_4/_{\mathbb{Z}_5}}(g(y)) = \frac{(y^2-1)^5((2+i)y^2+(2-i))}{(y^2+1)^5((2+i)y^2 -(2-i))}.$$
    In fact, $\widetilde{I_4/_{\mathbb{Z}_5}}$ is isomorphic to $I_4/_{\mathbb{Z}_5}$. Consider an automorphism $\tilde f$ of $\widetilde{I_4/_{\mathbb{Z}_5}}$
    \begin{gather*}
        \tilde f: y \mapsto -y
    \end{gather*}
    Note, $\tilde f = g^{-1}\circ f\circ g.$ Indeed, 
    \begin{gather*}
        g(\tilde f(x)) = g(-x) = -\frac{1}{g(x)} = f(g(x)).
    \end{gather*}
    A quotient of $\widetilde{I_4/_{\mathbb{Z}_5}}$ by the action of $\tilde f$ is $I_4/_{D_{10}}$. The quotient map is given by $\widetilde{I_4/_{\mathbb{Z}_5}} \rightarrow I_4/_{D_{10}}$ is 
    $$y^2 \mapsto z.$$
    The following diagram is commutative.
    \begin{gather}\label{diag_d10}
        \begin{CD}
            \widetilde{I_4/_{\mathbb{Z}_5}} @>g>> I_4/_{\mathbb{Z}_5}\\
            @Vy^2 \mapsto zVV                             @V\beta_{I_4/_{\mathbb{Z}_5}}VV      \\
             I_4/_{D_{10}}      @>\beta_{I_4/_{D_{10}}}>>         \mathbb{CP}^1
        \end{CD}
    \end{gather}
    So, the Belyi function for $I_4/_{D_{10}}$ is 
    \begin{equation*}
        \beta_{I_4/_{D_{10}}}(z) = \frac{(z-1)^5((2+i)z+(2-i))}{(z+1)^5((2+i)z -(2-i))}.
    \end{equation*}
\end{proof}

\section{Algebraic curves for $I_4/_{\mathbb{Z}_3}$ and $I_4/_{S_3}$}
\subsection{Hyperelliptic curve for $I_4/_{\mathbb{Z}_3}$}
\begin{figure}
    \centering
    \includegraphics[scale=0.35]{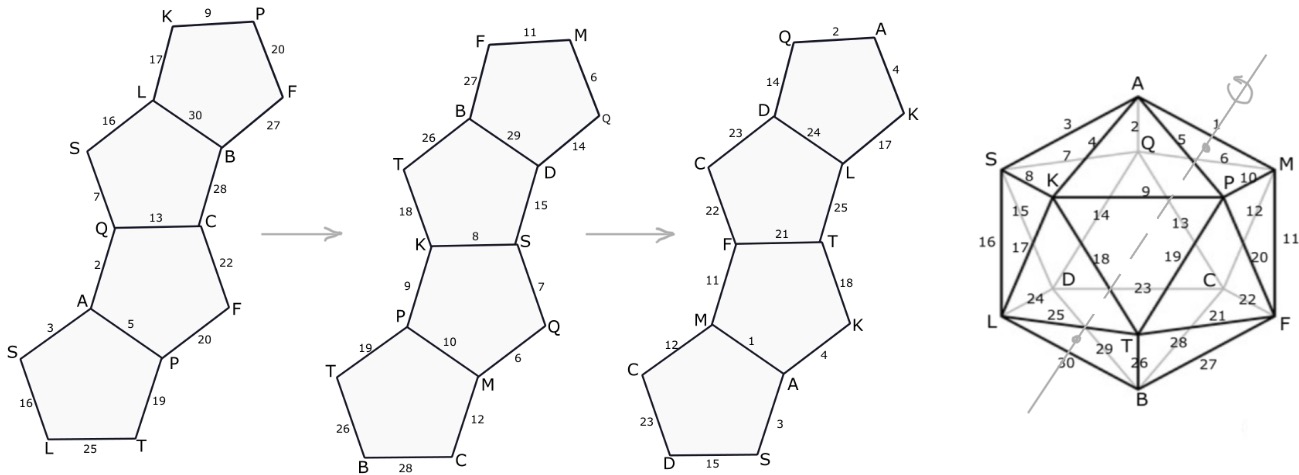}
    \caption{$\mathbb{Z}_3$ acting on the scanning of $I_4$}
    \label{im_rotation_z3}
\end{figure}
The rotation group $\mathbb{Z}_3$ of the icosahedron about the axis as shown on the right in Figure $\ref{im_rotation_z3}$ corresponds to $\mathbb{Z}_3 \subset \text{Aut}(I_4)$ (see Remark \ref{rem_i0_i4}). Also, we can visualize automormphisms of $I_4$ as automorphisms of the scanning of $I_4$ as it is shown on the left in Figure $\ref{im_rotation_z3}$. Now we focus our attention on Figure \ref{im_i4_z3}. On the left you can see a surface of genus 4 which is Bring's curve $\mathcal{B}$. The group of rotations of the surface about the axis is isomorphic to $\mathbb{Z}_3$. The quotient of $I_4$ by the rotational symmetry of order 3 is shown in the middle. On the right there is one part of the surface of genus 4, which we call a leg, and a graph embedded in this leg. If we draw such graphs on each of 3 legs of the surface of genus 4, we will get an embedding of $I_4$ in Bring's curve $\mathcal{B}$.
\begin{figure}
    \centering
    \includegraphics[scale=0.45]{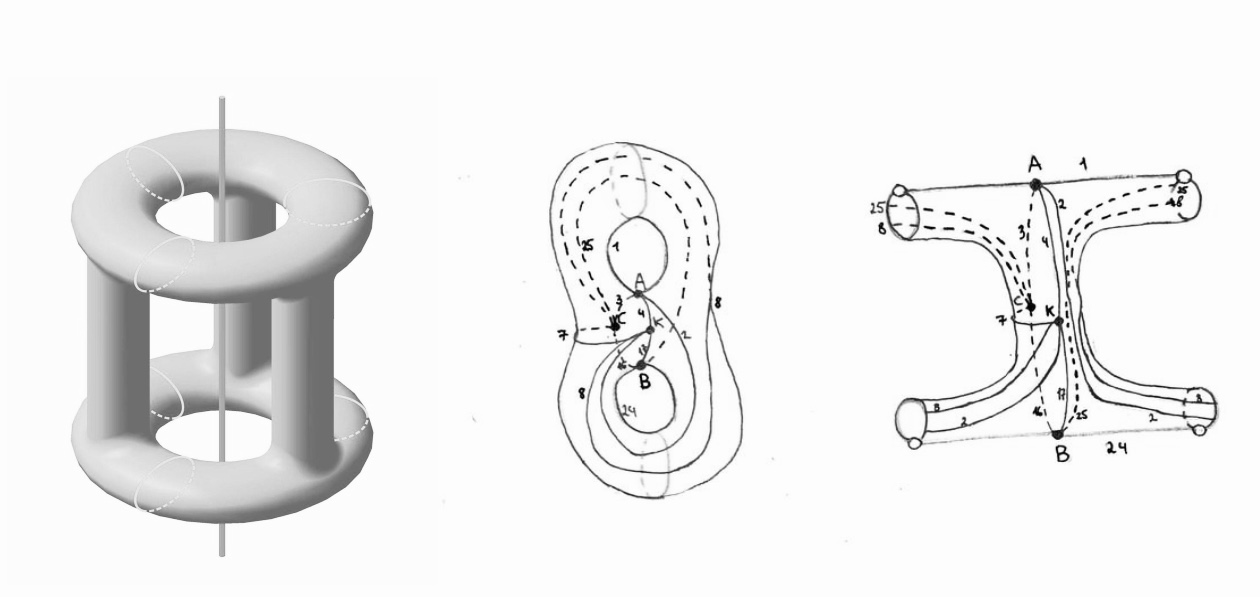}
    \caption{$I_4 \rightarrow I_4/_{\mathbb{Z}_3}$}
    \label{im_i4_z3}
\end{figure}
\begin{prop}\label{prop_z3}
    $\mathcal{B}/_{\mathbb{Z}_3}$ is biholomorphic to the algebraic curve defined by the affine equation
    \begin{equation}\label{eq_z3}
        y^2 = 432x^6+648x^5+945x^4+1350x^3+945x^2+648x +432.
    \end{equation}
\end{prop}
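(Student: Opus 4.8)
The plan is to realise $\mathcal{B}/_{\mathbb{Z}_3}$ explicitly by computing the field of $\mathbb{Z}_3$-invariant rational functions on $\mathcal{B}\subset\mathbb{CP}^4$. Up to conjugacy in $A_5=\text{Aut}(I_4)$ we may take $\mathbb{Z}_3=\langle(123)\rangle$, acting by cyclically permuting $x_1,x_2,x_3$ and fixing $x_4,x_5$. First I would check that this action is free on $\mathcal{B}$: a fixed point is either of the form $(u:u:u:x_4:x_5)$, and one verifies via (\ref{eqbring}) that no such point lies on $\mathcal{B}$, or else $x_4=x_5=0$ and $(x_1:x_2:x_3)=(1:\omega:\omega^2)$ with $\omega$ a primitive cube root of unity, which fails the third equation of (\ref{eqbring}). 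Riemann--Hurwitz then gives $2\cdot 4-2=3(2g-2)$, so $\mathcal{B}/_{\mathbb{Z}_3}$ has genus $2$ and is hyperelliptic; correspondingly $I_4/_{\mathbb{Z}_3}$ has $4$ black vertices of degree $5$, $10$ edges and $4$ pentagonal faces.

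Next I would identify the invariant function field. The $\mathbb{Z}_3$-invariants are generated by $x_4$, $x_5$, the elementary symmetric functions $e_1,e_2,e_3$ of $x_1,x_2,x_3$, and the extra invariant $\delta=(x_1-x_2)(x_2-x_3)(x_3-x_1)$, which satisfies $\delta^2=e_1^2e_2^2-4e_2^3-4e_1^3e_3+18e_1e_2e_3-27e_3^2$ (the discriminant of the cubic with roots $x_1,x_2,x_3$). The crucial point is that the Bring equations (\ref{eqbring}), rewritten via Newton's identities, determine $e_1,e_2,e_3$ in terms of $x_4,x_5$ alone:
\[
e_1=-(x_4+x_5),\qquad e_2=x_4^2+x_4x_5+x_5^2,\qquad e_3=-(x_4+x_5)(x_4^2+x_5^2).
\]
Passing to the affine coordinate $x=x_4/x_5$, every $e_i/x_5^i$ becomes a polynomial in $x$, so the invariant field is $\mathbb{C}(x)(\delta/x_5^3)$, a quadratic extension of $\mathbb{C}(x)$. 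Setting $y=c\,\delta/x_5^3$ and dividing the displayed discriminant relation by $x_5^6$, one obtains $y^2=c^2P(x)$ for an explicit sextic $P\in\mathbb{C}[x]$; choosing $c^2=-27$ turns this into precisely (\ref{eq_z3}). Since $\mathbb{C}(x,y)$ is the full invariant field, the affine curve (\ref{eq_z3}) is birational, hence biholomorphic, to $\mathcal{B}/_{\mathbb{Z}_3}$, and $P$ is automatically squarefree because the quotient has genus $2$.

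The only real computation is this last substitution: expanding $e_1^2e_2^2-4e_2^3-4e_1^3e_3+18e_1e_2e_3-27e_3^2$ with the above values, collecting powers of $x=x_4/x_5$, and scaling. I expect this to be the main obstacle, though it is routine. A good consistency check is that the resulting sextic is palindromic, reflecting the fact that the transposition $(45)\in\text{Aut}(\mathcal{B})$ descends to $\mathcal{B}/_{\mathbb{Z}_3}$ as $x\mapsto 1/x$, $y\mapsto y/x^3$; similarly the hyperelliptic involution $y\mapsto -y$ is induced by any transposition of $\{1,2,3\}$, which normalises $\mathbb{Z}_3$ in $S_5$.
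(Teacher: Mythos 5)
Your argument is correct, and I verified the decisive computation: Newton's identities applied to (\ref{eqbring}) do give $e_1=-(x_4+x_5)$, $e_2=x_4^2+x_4x_5+x_5^2$, $e_3=-(x_4+x_5)(x_4^2+x_5^2)$, and substituting these into $e_1^2e_2^2-4e_2^3-4e_1^3e_3+18e_1e_2e_3-27e_3^2$ yields $-16x^6-24x^5-35x^4-50x^3-35x^2-24x-16$ in $x=x_4/x_5$, which after multiplication by $c^2=-27$ is exactly the right-hand side of (\ref{eq_z3}). Your route, however, is genuinely different from the paper's. The paper uses the same projection $(x_1:\dots:x_5)\mapsto(x_4:x_5)$ and the same observation that it descends to a hyperelliptic double cover of $\mathbb{CP}^1$, but it finds the sextic by locating the branch points directly: it sets $x_1=x_2$ in (\ref{eqbring}), eliminates $t=x_1/x_5$, obtains two conjugate equations in $x$ involving $\sqrt{-2x^2-x-2}$, and takes their product as $P(x)$. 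The two answers coincide because the branch locus is precisely the vanishing locus of $\delta^2$. Your invariant-field argument buys several things the paper leaves implicit: an explicit formula for $y$ as a function on $\mathcal{B}$ (namely $\sqrt{-27}\,\delta/x_5^3$), a fixed normalization of the sextic rather than only its root set, a structural explanation of the palindromy via $(45)$, and the genus computation (freeness of $\langle(123)\rangle$ plus Riemann--Hurwitz) that the paper merely asserts. The paper's elimination is more elementary and avoids Galois-theoretic language. One sentence worth adding to your write-up: to see that $\mathbb{C}(x,\delta/x_5^3)$ is the \emph{full} invariant field, note that $\pi$ is Galois over $\mathbb{C}(x)$ with group $S_3$ permuting $x_1,x_2,x_3$, so the $A_3$-invariants form the unique intermediate quadratic extension, and $\delta/x_5^3$ generates it since it is $A_3$-invariant but changes sign under transpositions.
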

\begin{proof}
    Since the genus of $\mathcal{B}/_{\mathbb{Z}_3}$ is equal to $2$, it follows that $\mathcal{B}/_{\mathbb{Z}_3}$ is a hyperelliptic curve. Hence, $\mathcal{B}/_{\mathbb{Z}_3}$ is given by an equation of the form 
    $$y^2 = P(x),$$
    where roots of a polynomial $P(x)$ are ramification points of a hyperelliptic covering. 

    Consider the projection
    \begin{gather*}
        \pi: \mathcal{B} \rightarrow \mathbb{CP}^1, \quad \pi: (x_1:x_2:x_3:x_4:x_5) \mapsto (x_4:x_5).
    \end{gather*}
    A regular point $(x_4:x_5) \in \mathbb{CP}^1$ has $6$ preimeges on Bring's curve:
    \begin{gather*}
        (x_1:x_2:x_3:x_4:x_5),\:
        (x_2:x_3:x_1:x_4:x_5),\:
        (x_3:x_1:x_2:x_4:x_5),\\
        (x_2:x_1:x_3:x_4:x_5),\:
        (x_1:x_3:x_2:x_4:x_5),\:
        (x_3:x_2:x_1:x_4:x_5).
    \end{gather*}
    These six points on Bring's curve are mapped into two points on $\mathcal{B}/_{\mathbb{Z}_3}$, where $\mathbb{Z}_3 = \langle(123)\rangle \subset \text{Aut}(I_4).$
    \begin{gather*}
        [(x_1:x_2:x_3:x_4:x_5)]_{\langle(123)\rangle} = [(x_2:x_3:x_1:x_4:x_5)]_{\langle(123)\rangle} = [(x_3:x_1:x_2:x_4:x_5)]_{\langle(123)\rangle},\\
        [(x_2:x_1:x_3:x_4:x_5)]_{\langle(123)\rangle} = [(x_1:x_3:x_2:x_4:x_5)]_{\langle(123)\rangle} = [(x_3:x_2:x_1:x_4:x_5)]_{\langle(123)\rangle}.
    \end{gather*}
    We see that $\pi$ induces a two-to-one covering of $\mathbb{CP}^1$ by $\mathcal{B}/_{\mathbb{Z}_3}$, i.e. a hyperelliptic covering. Note that the hyperelliptic covering and $\pi$ have the same ramification points.

    A point $(x_4:x_5)$ is a ramification point of $\pi$ iff the system (\ref{eqbring}) has a solution $(x_1, x_2, x_3)$ in terms of $x_4, x_5$ such that $x_1 = x_2$. Setting $x_1 = x_2$ in (\ref{eqbring}), we obtain the following system:
    \begin{equation}\label{eq1=2}
        \begin{cases}
            2x_1 + x_3 + x_4 + x_5 = 0,\\
            2x_1^2 + x_3^2 + x_4^2 + x_5^2 = 0,\\
            2x_1^3 + x_3^3 + x_4^3 + x_5^3 = 0.
        \end{cases}
   \end{equation}
  We need to solve the system (\ref{eq1=2}) for $\frac{x_4}{x_5}$.
  The first equation in the system (\ref{eq1=2}) gives 
  $$x_3 = -2x_1-x_4-x_5.$$
  Now substitute this expression for $x_3$ into the second and the third equations of (\ref{eq1=2}):
  \begin{equation}\label{eq_proof_z3_1}
      \begin{cases}
          6x_1^2 + 4x_1(x_4 + x_5) + x_4^2 + x_5^2 + (x_4 + x_5)^2 = 0,\\
          2x_1^3 - (x_4 + x_5 + 2x_1)^3 + x_4^3 + x_5^3 = 0.
      \end{cases}
  \end{equation}
  We set
  $$t = \frac{x_1}{x_5}, \quad x = \frac{x_4}{x_5}.$$
  The system (\ref{eq_proof_z3_1}) in variables $t, x$ is
  \begin{equation}\label{eq_proof_z3_2}
      \begin{cases}
          6t^2 + 4tx + 4t + 2x^2 + 2x + 2 = 0,\\
          6t^3 + 12t^2x + 12t^2 + 6tx^2 + 12tx + 6t + 3x^2 + 3x =0.
      \end{cases}
  \end{equation}
  We multiply the first equation in the system (\ref{eq_proof_z3_2}) by $(\frac{1}{2} - 3t)$ and sum it with the second equation. The system (\ref{eq_proof_z3_2}) is equivalent to
  \begin{equation}\label{eq1=2sq}
      \begin{cases}
          3t^2 + 2t(x+1) + x^2 + x + 1 = 0,\\
          4t^3 + 3t^2 + 2t + 1 = 0.
      \end{cases}
  \end{equation}
  The first equation in the system (\ref{eq1=2sq}) is quadratic in terms of $t$, the roots of this equation are 
  $$t_{1,2} = \frac{-x-1\pm\sqrt{(x+1)^2-3(x^2 + x + 1)}}{3}.$$
  Substituting the expressions $t_{1,2}$ into the second equation of the system (\ref{eq1=2sq}), we obtain two equations:
  \begin{equation}\label{eq_proof_z3_5}
      20x^3 + 15x^2 + 15x + 20 + (4x^2 + 2x + 4)\sqrt{-2x^2 - x - 2} = 0,
  \end{equation}
  \begin{equation}\label{eq_proof_z3_6}
      20x^3 + 15x^2 + 15x + 20 - (4x^2 + 2x + 4)\sqrt{-2x^2 - x - 2} = 0.
  \end{equation}
  We conclude that
  \begin{multline*}
      P(x) = (20x^3 + 15x^2 + 15x + 20 + (4x^2 + 2x + 4)\sqrt{-2x^2 - x - 2})(20x^3 + 15x^2 + 15x + 20 - (4x^2 + 2x + 4)\sqrt{-2x^2 - x - 2}) = \\
      = 432x^6+648x^5+945x^4+1350x^3+945x^2+648x +432.
  \end{multline*}
\end{proof}

\subsection{Elliptic curve for $I_4/_{S_3}$}
\begin{prop}\label{prop_s3}
    $\mathcal{B}/_{S_3}$ is given by the following equation:
    \begin{equation}
        y^2 = x^4 + 80x^3 + 125x^2 + 50x
    \end{equation}
\end{prop}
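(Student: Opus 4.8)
The plan is to realize $\mathcal{B}/_{S_3}$ as a further $\mathbb{Z}_2$-quotient of the genus-$2$ curve $\mathcal{B}/_{\mathbb{Z}_3}$ of Proposition~\ref{prop_z3}. Write $S_3 = \langle (123)\rangle \rtimes \langle (12)(45)\rangle \subset A_5 = \text{Aut}(I_4)$; then $S_3/\langle(123)\rangle \cong \mathbb{Z}_2$ acts on $\mathcal{B}/_{\mathbb{Z}_3}$ through the image of the involution $(12)(45)$, and $\mathcal{B}/_{S_3} = (\mathcal{B}/_{\mathbb{Z}_3})/_{\mathbb{Z}_2}$. So the first step is to write down this residual involution $\tau$ in the affine model $y^2 = P(x)$ of equation~(\ref{eq_z3}), where, as in the proof of Proposition~\ref{prop_z3}, $x = x_4/x_5$. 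Since $(12)(45)$ swaps $x_4 \leftrightarrow x_5$, it acts on the base $\mathbb{CP}^1$ by $x \mapsto 1/x$; and since it acts by an odd permutation on $\{x_1,x_2,x_3\}$, it changes the sign of the hyperelliptic coordinate $y$ (which, up to a function of $x$, is the product $\prod_{1\le i<j\le 3}(x_i-x_j)$ that cuts the two $\langle(123)\rangle$-orbits apart). Using that $P$ is palindromic, this gives $\tau\colon (x,y)\mapsto (1/x,\,-y/x^3)$. I would then check this is indeed an involution of $y^2=P(x)$ and that it has exactly two fixed points, so that by Riemann–Hurwitz the quotient $\mathcal{B}/_{S_3}$ has genus $1$, as the statement requires.

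Next I would compute the quotient explicitly via $\tau$-invariant functions on $\mathcal{B}/_{\mathbb{Z}_3}$. These are generated by $u = x + 1/x$ and $w = (x-1)y/x^2$, the linear factor $x-1$ being forced by $\tau$-invariance. Writing the palindromic sextic as $P(x) = x^3\,Q(x+1/x)$ with $Q(u) = 432u^3 + 648u^2 - 351u + 54$, and using $(x-1)^2/x = u-2$, one gets
\[
    w^2 = \frac{(x-1)^2 P(x)}{x^4} = (u-2)\,Q(u) = (u-2)\bigl(432u^3 + 648u^2 - 351u + 54\bigr),
\]
which is therefore an affine model of $\mathcal{B}/_{S_3}$. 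It remains to bring this quartic model to the asserted form $y^2 = x^4 + 80x^3 + 125x^2 + 50x$: the shift $u = v+2$ clears the constant term, $w^2 = v\bigl(432v^3 + 3240v^2 + 7425v + 5400\bigr) = 27\,v\bigl(16v^3 + 120v^2 + 275v + 200\bigr)$, and a suitable fractional-linear substitution in $v$ (matching the four branch points) together with a rescaling of $w$ produces the stated equation. Since the two models are to be compared only up to biholomorphism, one may instead finish by checking that the $j$-invariants of $w^2 = (u-2)Q(u)$ and of $y^2 = x^4 + 80x^3 + 125x^2 + 50x$ coincide — one computes $j = -121945/32$ in both cases — which suffices because genus-$1$ curves with equal $j$ are isomorphic over $\mathbb{C}$.

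The main obstacle is the first step: correctly pinning down the residual involution, and in particular the sign in $\tau\colon (x,y)\mapsto (1/x,-y/x^3)$. Both a priori candidates $(x,y)\mapsto(1/x,\pm y/x^3)$ define involutions of $y^2=P(x)$ with two fixed points and genus-$1$ quotient, but they yield non-isomorphic elliptic curves, $v^2 = (u\mp 2)Q(u)$; so the argument must genuinely track how $(12)(45)$ acts on the hyperelliptic $y$, i.e.\ on $\prod_{i<j}(x_i-x_j)$. This is the only delicate point — the subsequent invariant-theory computation and the final normalization of the quartic are routine.
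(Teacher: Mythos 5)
Your proposal is correct and follows the same overall strategy as the paper: present $\mathcal{B}/_{S_3}$ as the quotient of the hyperelliptic model $y^2=P(x)$ of $\mathcal{B}/_{\mathbb{Z}_3}$ by the induced involution $(x,y)\mapsto(1/x,-y/x^3)$, then compute the quotient via invariant functions. The two places where you genuinely diverge are worth recording. First, the sign of the involution — which you rightly flag as the only delicate point — is settled in the paper by a Belyi-function argument: if the action were $(x,y)\mapsto(1/x,+y/x^3)$, the points over $x=1$ (i.e.\ $x_4=x_5$) would be fixed, hence ramification points of $I_4/_{\mathbb{Z}_3}\to I_4/_{S_3}$ and therefore critical points of $\beta_{I_4/_{\mathbb{Z}_3}}$; but $\beta_{I_4}=-1$ there, contradicting that all critical values lie in $\{0,1,\infty\}$. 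Your alternative — tracking the sign of $\delta=\prod_{1\le i<j\le 3}(x_i-x_j)$ under the odd permutation $(12)$ — is sound in spirit but stated loosely (``up to a function of $x$'' leaves open whether that function itself contributes a sign under $x\mapsto 1/x$); the clean version of your idea is that the two points of $\mathcal{B}/_{\mathbb{Z}_3}$ over $x=1$ are exactly the two $\langle(123)\rangle$-orbits over $x_4=x_5$, and $(12)(45)$ visibly interchanges $[(x_1:x_2:x_3:x_4:x_4)]$ with $[(x_2:x_1:x_3:x_4:x_4)]$, so the lift fixing the fibre over $x=1$ is excluded. This is purely group-theoretic and arguably more elementary than the paper's route, which leans on the Belyi structure. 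Second, for the quotient computation the paper substitutes $z=(x-1)/(x+1)$, $w=4y/(3\sqrt{3}(x+1)^3)$, obtaining the even sextic $w^2=z^6+80z^4+125z^2+50$ and landing directly on $y^2=x^4+80x^3+125x^2+50x$ via $z\mapsto z^2$; your coordinates $u=x+1/x$, $w=(x-1)y/x^2$ give the correct isomorphic model $w^2=(u-2)Q(u)$ (indeed $\tilde z=z^2=(u-2)/(u+2)$ is the Möbius change of base relating the two), but then you still owe the final normalization or the $j$-invariant comparison, whereas the paper's choice of coordinates makes that step unnecessary.
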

\begin{figure}
    \centering
    \includegraphics[scale=0.25]{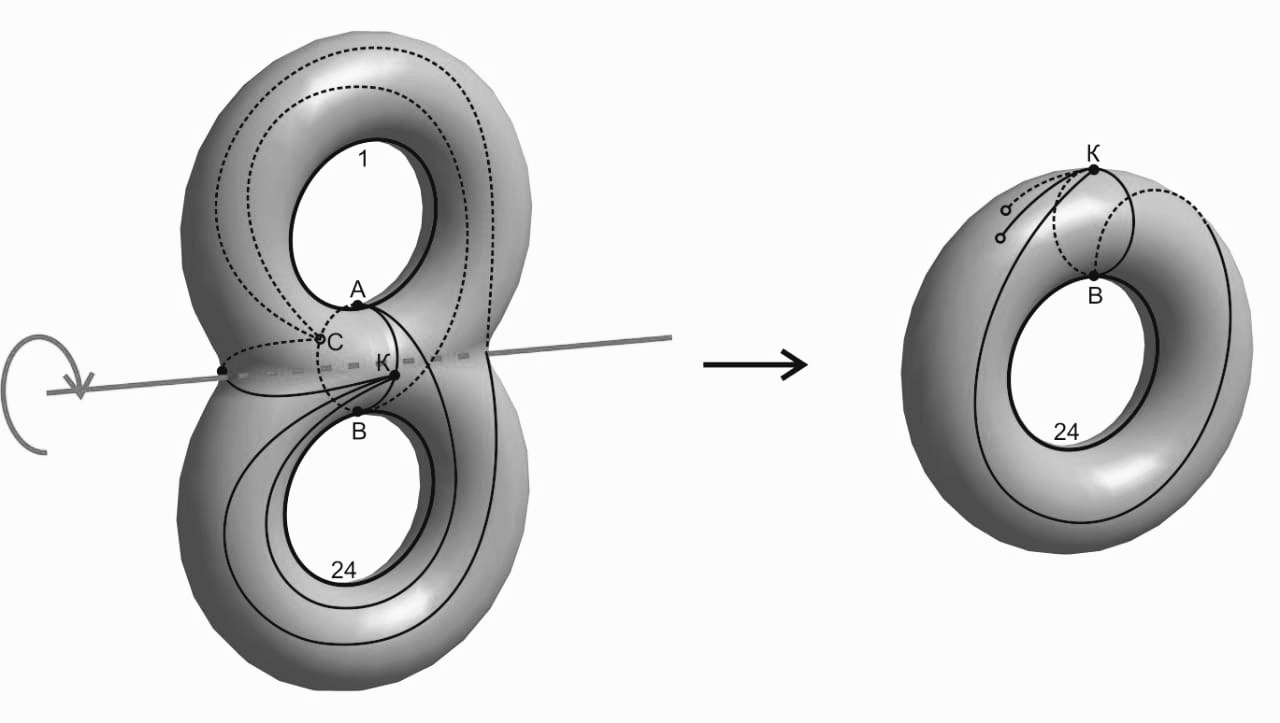}
    \caption{$I_4/_{\mathbb{Z}_3} \rightarrow I_4/_{S_3}$}
    \label{im_i4_s3}
\end{figure}
\textit{Notation.} By $(12)(45)^{\mathbb{Z}_3}$ we denote the automorphism of the curve $\mathcal{B}/_{\mathbb{Z}_3}$ such that the following diagram is commutative: 
\begin{gather}
    \begin{CD}
        \mathcal{B} @>(12)(45)>> \mathcal{B}\\
        @VVV                      @VVV      \\
        \mathcal{B}/_{\langle(123)\rangle} @>(12)(45)^{\mathbb{Z}_3}>> \mathcal{B}/_{\langle(123)\rangle}
    \end{CD}
\end{gather}
\textit{Sketch of proof of Proposition \ref{prop_s3}.} The curve $\mathcal{B}/_{\langle(123)\rangle}$ is given by the equation (\ref{eq_z3}). We compute an equation of the quotient of $\mathcal{B}/_{\langle(123)\rangle}$ by the action of $(12)(45)^{\mathbb{Z}_3} \in \text{Aut}(\mathcal{B}/_{\mathbb{Z}_3})$ in order to find $\mathcal{B}/_{S_3},$ where
$$S_3 = \langle (123), (12)(45) \rangle \subset \text{Aut}(I_4).$$
\begin{prop}
    The automorphism $(12)(45)^{\mathbb{Z}_3} \in \text{Aut}(\mathcal{B}/_{\mathbb{Z}_3})$ acts on the curve $\mathcal{B}/_{\mathbb{Z}_3}$, given by the equation (\ref{eq_z3}), as follows:
    $$(12)(45)^{\mathbb{Z}_3}: (x, y) \mapsto (\frac{1}{x}, -\frac{y}{x^3}).$$
\end{prop}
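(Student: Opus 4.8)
The plan is to use the model of $\mathcal{B}/_{\mathbb{Z}_3}$ from the proof of Proposition~\ref{prop_z3}, where the hyperelliptic map to $\mathbb{CP}^1$ sends $[(x_1:\cdots:x_5)]$ to $x:=x_4/x_5$. First I would record that $(12)(45)\in S_5=\text{Aut}(\mathcal{B})$ normalises $\langle(123)\rangle$ — conjugation by $(12)$ sends $(123)$ to $(132)=(123)^{-1}$, and $(45)$ commutes with $(123)$ — so $(12)(45)$ descends to the automorphism $g:=(12)(45)^{\mathbb{Z}_3}$ of $\mathcal{B}/_{\mathbb{Z}_3}$ fitting into the commutative square above. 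The first coordinate is then immediate: $(12)(45)$ sends $(x_1:x_2:x_3:x_4:x_5)$ to $(x_2:x_1:x_3:x_5:x_4)$, hence it sends the function $x_4/x_5$ to $x_5/x_4$, i.e.\ $g$ acts by $x\mapsto 1/x$.

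The substantive part is to realise the second coordinate $y$ as an explicit function on $\mathcal{B}$. I would take $\widetilde{y}:=(x_1-x_2)(x_2-x_3)(x_3-x_1)/x_5^{3}$, which is $(123)$-invariant, hence a rational function on $\mathcal{B}/_{\mathbb{Z}_3}$, and which changes sign under $(12)$, i.e.\ under the hyperelliptic involution $\iota$. Any $\iota$-anti-invariant function has the form $h(x)\cdot y$ with $h\in\mathbb{C}(x)$, so $\widetilde{y}=h(x)\,y$, and the crux is to show that $h$ is a nonzero constant. I would do this by comparing divisors: the zeros of $\widetilde{y}$ are exactly the branch points of the degree-two cover (the $x$ at which two of $x_1,x_2,x_3$ coincide; no three coincide, by a short check with~(\ref{eqbring}), so all six branch points are simple), and its only poles are over $x=\infty$, where~(\ref{eqbring}) shows that $x_1,x_2,x_3$ tend to the three distinct roots of $(t+1)(t^{2}+1)$ while $x_5\to0$, giving a pole of order $3$; this is exactly the divisor of $y$ for the model~(\ref{eq_z3}), so $\widetilde{y}/y$ has trivial divisor and is constant. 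This divisor comparison — in effect, ruling out a nonconstant factor $h(x)$ — is the step I expect to need the most care; an alternative is to expand the discriminant of the cubic with roots $x_1,x_2,x_3$ via Newton's identities and check directly that $\widetilde{y}^{\,2}$, as a function of $x$, is a scalar multiple of the polynomial $P(x)$ in~(\ref{eq_z3}), which again forces $h$ to be constant.

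Finally I would compute $g^{*}\widetilde{y}$ directly from the coordinate permutation: $(12)(45)$ turns $(x_1-x_2)(x_2-x_3)(x_3-x_1)$ into $(x_2-x_1)(x_1-x_3)(x_3-x_2)=-(x_1-x_2)(x_2-x_3)(x_3-x_1)$ and turns $x_5^{3}$ into $x_4^{3}$, so $g^{*}\widetilde{y}=-\widetilde{y}/x^{3}$ — the minus sign is the odd sign of the transposition $(12)$, while the factor $1/x^{3}$ comes from swapping $x_4$ and $x_5$. Since $y$ is a constant multiple of $\widetilde{y}$, this yields $g^{*}y=-y/x^{3}$, i.e.\ $g\colon(x,y)\mapsto(1/x,-y/x^{3})$. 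As a consistency check, this map fixes precisely the two points lying over $x=-1$ (and none over $x=+1$, since $P(1)\neq0$), which matches a direct count of fixed points of $g$ on $\mathcal{B}$: a point whose $\langle(123)\rangle$-orbit is $(12)(45)$-invariant is forced by~(\ref{eqbring}) to be one of the white vertices $(1:0:-1:\pm i:\mp i)$ or a cyclic rotate, all of which satisfy $x_4/x_5=-1$; this also rules out the competing sign $+y/x^{3}$, whose fixed points would lie over $x=+1$.
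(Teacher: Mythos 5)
Your argument is correct, and it settles the only delicate point --- the sign in the second coordinate --- by a genuinely different route than the paper. Both you and the paper get $x\mapsto 1/x$ identically, from $(12)(45)$ swapping $x_4$ and $x_5$ under the projection $(x_1:\cdots:x_5)\mapsto(x_4:x_5)$, and both then face the choice between the two lifts $(1/x,\pm y/x^{3})$. The paper resolves it indirectly, by contradiction: the lift with the $+$ sign would fix the two points over $x=1$, i.e.\ the points with $x_4=x_5$; fixed points of the involution are ramification points of the quotient map $I_4/_{\mathbb{Z}_3}\to I_4/_{S_3}$, hence critical points of $\beta_{I_4/_{\mathbb{Z}_3}}=\beta_{I_4/_{S_3}}\circ\varphi$, whose value there would be $-1\notin\{0,1,\infty\}$ --- this uses the separately checked identity $\beta_{I_4}(x_1:x_2:x_3:x_4:x_4)=-1$ and the Belyi property. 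You instead resolve it directly, realising $y$ (up to a nonzero constant) as the explicit anti-invariant function $\widetilde y=(x_1-x_2)(x_2-x_3)(x_3-x_1)/x_5^{3}$ and reading the pullback off from the sign of $(12)$; your closing fixed-point count over $x=-1$ is essentially the paper's argument run in reverse and corroborates it. Your route is self-contained (no appeal to the Belyi function) and pins down the hyperelliptic coordinate concretely, at the price of the divisor comparison you rightly flag as the delicate step: the simplicity of the six zeros of $\widetilde y$ is most cleanly forced by the degree count (the polar divisor has degree exactly $6$, concentrated over $x=\infty$, while Riemann--Hurwitz gives exactly six branch points for a genus-$2$ hyperelliptic cover), rather than by ``no three of $x_1,x_2,x_3$ coincide'' alone; with that observation, or with your alternative direct computation of $\widetilde y^{\,2}$ as a rational function of $x$, the proof is complete.
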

Figure \ref{im_i4_s3} visualizes the action of $(12)(45)^{\mathbb{Z}_3}$ on $\mathcal{B}/_{\mathbb{Z}_3}$.
\begin{proof}
    In the proof of Proposition \ref{prop_z3} we use the hyperelliptic covering 
    $$\mathcal{B}/_{\mathbb{Z}_3} \rightarrow \mathbb{CP}^1,$$
    induced by the projection
    \begin{equation*}
       \pi: \mathcal{B} \rightarrow \mathbb{CP}^1, \quad \pi:(x_1:x_2:x_3:x_4:x_5) \mapsto (x_4:x_5),
    \end{equation*}
    to find the equation (\ref{eq_z3}):
     $$y^2 = 432x^6+648x^5+945x^4+1350x^3+945x^2+648x +432$$
    of the quotient of Bring's curve $\mathcal{B}/_{\mathbb{Z}_3}$.
    
    Because the automorphism $(12)(45) \in \text{Aut}(\mathcal{B})$ swaps $x_4$ and $x_5$, the involution $(12)(45)^{\mathbb{Z}_3}\in \text{Aut}(\mathcal{B}/_{\mathbb{Z}_3})$ maps $x = \frac{x_4}{x_5}$ to $\frac{1}{x} = \frac{x_5}{x_4}$.
    Thus, $(12)(45)^{\mathbb{Z}_3}(x, y)$ is either
    $(\frac{1}{x}, \frac{y}{x^3})$ or $(\frac{1}{x}, -\frac{y}{x^3})$.\\
    Assume that 
    $$(12)(45)^{\mathbb{Z}_3}: (x,y) \mapsto (\frac{1}{x}, \frac{y}{x^3}).$$
    Then 
     $$(12)(45)^{\mathbb{Z}_3}(1,y) = (\frac{1}{1}, \frac{y}{1}) = (1, y),$$
    i.e. points $(1,y)$ are fixed. Note, $x =\frac{x_4}{x_5} = 1$ iff $x_4 = x_5$. As you can check, if $x_4 = x_5$ then 
    \begin{gather}\label{eq_prop_aut_z3_1}
        \beta_{I_4}(x_1:x_2:x_3:x_4:x_4) = -1
    \end{gather}
    Let us denote Belyi functions for $I_4/_{\mathbb{Z}_3}$ and $I_4/_{S_3}$ by $\beta_{I_4/_{\mathbb{Z}_3}}$ and $\beta_{I_4/_{S_3}}$ respectively. According to the diagram in Figure \ref{gensch}, the function $\beta_{I_4/_{\mathbb{Z}_3}}$ can be decomposed:
    \begin{equation}\label{prop_aut_z3_decomp}
        \beta_{I_4/_{\mathbb{Z}_3}} = \beta_{I_4/_{S_3}}\circ \varphi, 
    \end{equation}
    where $\varphi: I_4/_{\mathbb{Z}_3} \rightarrow I_4/_{S_3}$ is the quotienting by the action of $(12)(45)^{\mathbb{Z}_3}$. Ramification points of $\varphi$ are fixed points of the involution $(12)(45)^{\mathbb{Z}_3}$. By decomposition (\ref{prop_aut_z3_decomp}), these fixed points $(1,y) \in \mathcal{B}/_{\mathbb{Z}_3}$ are ramification points of $\beta_{I_4/_{\mathbb{Z}_3}}$.This is a contradiction with (\ref{eq_prop_aut_z3_1}). Indeed, all critical values of $\beta_{I_4/_{\mathbb{Z}_3}}$ belong to $\{0,1,\infty \}$ and $-1 \not\in \{0,1,\infty \}$. 
    
    Hence, $(12)(45)^{\mathbb{Z}_3}: (x, y) \mapsto (\frac{1}{x}, -\frac{y}{x^3}).$
\end{proof}
\begin{remark}
    The automorphism $(45)^{\mathbb{Z}_3} \in \text{Aut}(\mathcal{B}/_{\mathbb{Z}_3})$ acts on the curve $\mathcal{B}/_{\mathbb{Z}_3}$, given by equation (\ref{eq_z3}), as follows:
    $$(45)^{\mathbb{Z}_3}: (x, y) \mapsto (\frac{1}{x}, \frac{y}{x^3}).$$
\end{remark}
\begin{proof}[Proof of Proposition \ref{prop_s3}]
    Let us find the quotient of the curve $\mathcal{B}/_{\mathbb{Z}_3}$, given by the equation (\ref{eq_z3})
    $$y^2 = 432x^6+648x^5+945x^4+1350x^3+945x^2+648x +432,$$
    by the action of the automorphism 
    $$(12)(45)^{\mathbb{Z}_3}: (x, y) \mapsto (\frac{1}{x}, -\frac{y}{x^3}).$$
    We set
    \begin{gather}\label{prop_s3_z}
        z(x) = \frac{x-1}{x+1}, \quad w = \frac{4y}{3\sqrt{3}(x+1)^3}
    \end{gather}
    We get the equation (\ref{eq_z3}) in terms of $z$ and $w$:
    \begin{equation}\label{eq_z3_zw}
        w^2 = z^6 + 80z^4 + 125z^2 + 50.
    \end{equation}
    Note, $z(\frac{1}{x}) = -z(x)$. Let $\pm c_i$ be roots of the right-hand side of (\ref{eq_z3_zw}). Then we have
    $$w^2 = \prod\limits_{i=1}^{3}(z^2-c_i^2).$$
    The automorphism $(12)(45)^{\mathbb{Z}_3}$ acts in terms of $z$ and $w$ as follows:
    $$(z, w) \mapsto (-z, -w).$$
    Consider the branched covering of a projective line
    \begin{gather*}
        \mathcal{B}/_{\mathbb{Z}_3} \rightarrow \mathbb{CP}^1,\\
        (z, w) \mapsto z^2.
    \end{gather*}
    A regular point $\tilde z = z^2 \in \mathbb{CP}^1$ has 4 preimages on $\mathcal{B}/_{\mathbb{Z}_3}$:
     \begin{gather*}
        (z, w), \quad (-z, -w), \quad (-z, w), \quad (z, -w).
    \end{gather*}
    These four points are mapped into two points on $\mathcal{B}/_{S_3}$:
    \begin{gather*}
        [(z,w)]_{(z,w)\mapsto (-z,-w)}, \quad [(z,-w)]_{(z,w)\mapsto (-z,-w)}.
    \end{gather*}
    So, we obtain a branched covering $\mathcal{B}/_{S_3} \rightarrow \mathbb{CP}^1$ of degree 2. Ramification points of the covering are $\{0, c_1^2, c_2^2, c_3^2\}$. Thus, the curve $\mathcal{B}/_{S_3}$ is given by 
    $$y^2 = x\prod\limits_{i=1}^{3}(x-c_i^2) =x(x^3 + 80x^2 + 125x + 50).$$
\end{proof}

\section{Algebraic curves for $I_4/_{\mathbb{Z}_2}$, $I_4/_{\mathbb{Z}_2\times\mathbb{Z}_2}$ and $I_4/_{A_4}$}
Let us introduce new homogeneous coordinates in $\mathbb{CP}^4$, following to \cite{Riera}:
    \begin{gather*}
        u_1=\frac{x_1}{2}, \quad u_2 = \frac{x_2+x_4}{2}, \quad u_3 = \frac{x_3+x_5}{2}, \quad u_4 = \frac{x_2-x_4}{2}, \quad u_5 = \frac{x_3-x_5}{2}.
    \end{gather*}
Bring's curve $\mathcal{B}$ in new coordinates $(u_1:u_2:u_3:u_4:u_5)$ is given by
    \begin{equation}\label{eq1}
        \begin{cases}
            u_1 = - u_2 - u_3, \\
            3u_2^2 + 3u_3^2 + 4u_2u_3 + u_4^2 + u_5^2 = 0,\\
            u_2^3 + u_3^3 + 4u_2^2u_3 + 4u_2u_3^2 - u_2u_4^2 - u_3u_5^2 = 0.
        \end{cases}
    \end{equation}
In this section we consider the following automorphisms of Bring's curve $\mathcal{B}$:
\begin{gather*}
    (24)(35): (x_1:x_2:x_3: x_4: x_5) \mapsto (x_1:x_4:x_5: x_2: x_3),\\
    (23)(45): (x_1:x_2:x_3: x_4: x_5) \mapsto (x_1:x_3:x_2: x_5: x_4).
\end{gather*}
The automorphism $(24)(35)$ acts as $\phi_1$ in new coordinates:
\begin{gather*}
    \phi_1: (u_1:u_2:u_3:u_4:u_5) \mapsto (u_1:u_2:u_3:-u_4:-u_5),
\end{gather*}
and the automorphism $(23)(45)$ acts as $\phi_2$: 
\begin{gather*}
    \phi_2: (u_1:u_2:u_3:u_4:u_5) \mapsto (u_1:u_3:u_2: u_5:u_4).
\end{gather*}
\subsection{Hyperelliptic curve for $I_4/_{\mathbb{Z}_2}$}
The rotation group $\mathbb{Z}_2$ of the icosahedron about the axis as shown on the right in Figure $\ref{im_i4_z2_rot}$ corresponds to $\mathbb{Z}_2 \subset \text{Aut}(I_4)$ (see Remark \ref{rem_i0_i4}). Also, we can visualize automormphisms of $I_4$ as automorphisms of the scanning of $I_4$ as it is shown on the left in Figure $\ref{im_i4_z2_rot}$.
\begin{figure}
    \centering
    \includegraphics[scale=0.35]{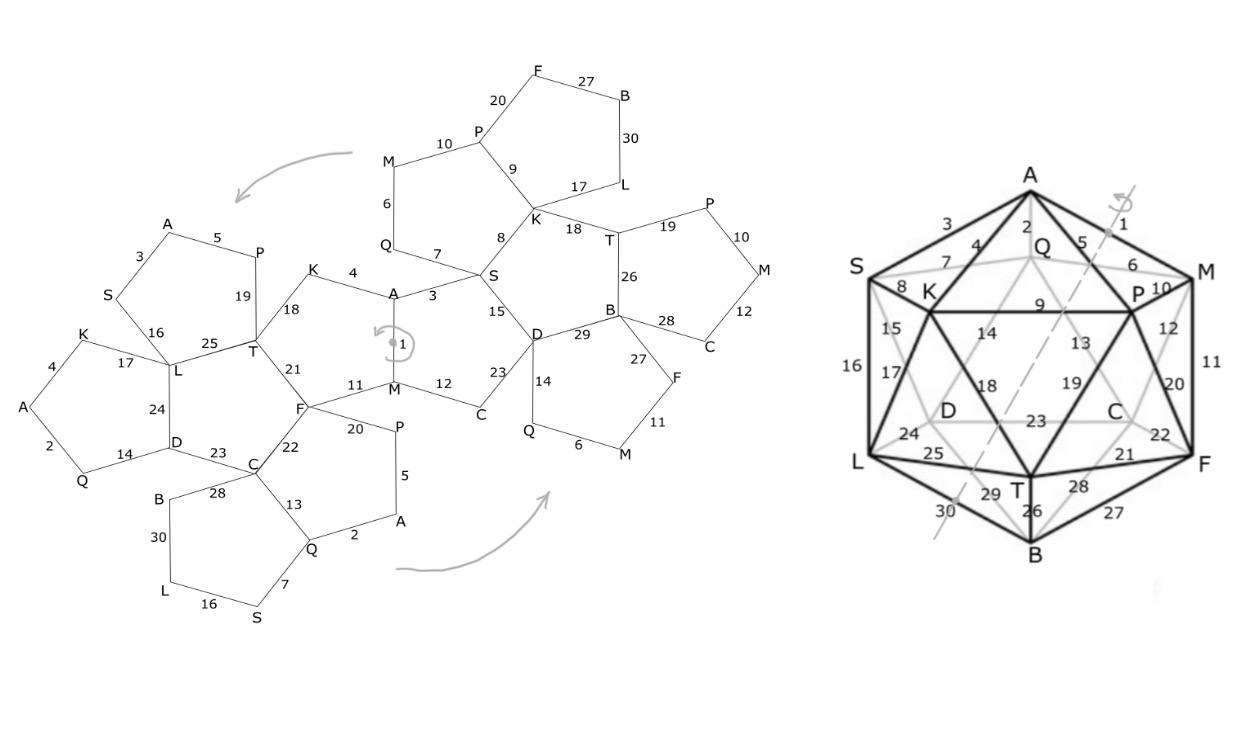}
    \caption{$\mathbb{Z}_2$ acting on the scanning of $I_4$}
    \label{im_i4_z2_rot}
\end{figure}
\begin{figure}
    \begin{minipage}{0.49\linewidth}    
        \center{\includegraphics[scale=0.2]{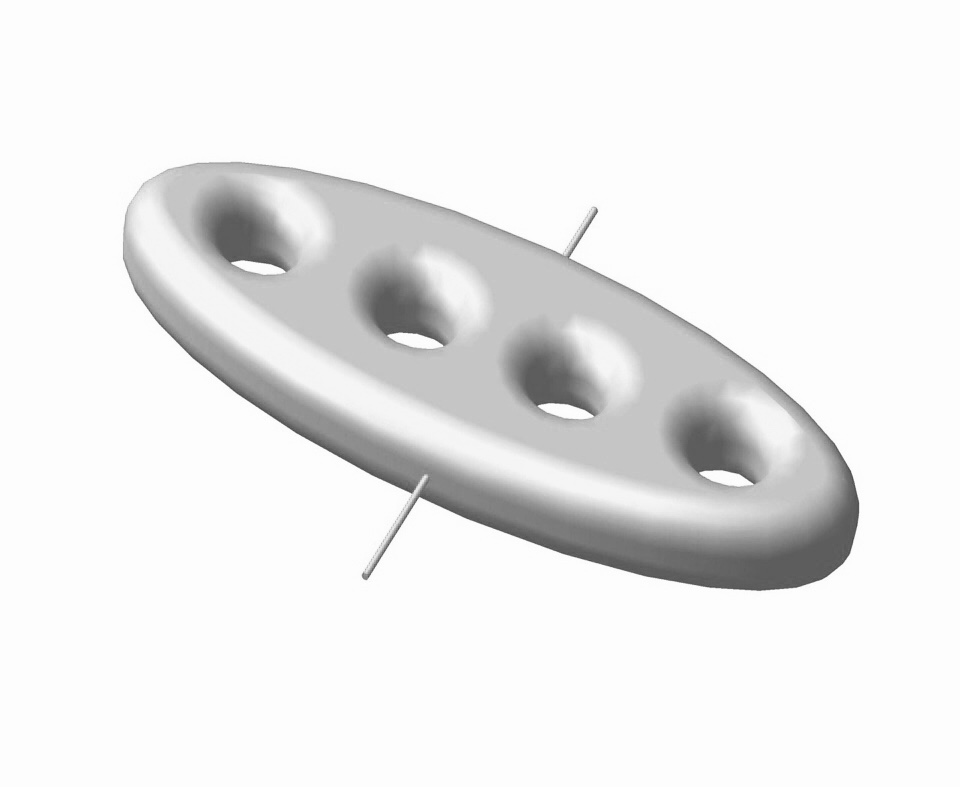}}
        \label{autz2}
    \end{minipage}
    \begin{minipage}{0.49\linewidth}
	\center{\includegraphics[scale=0.45]{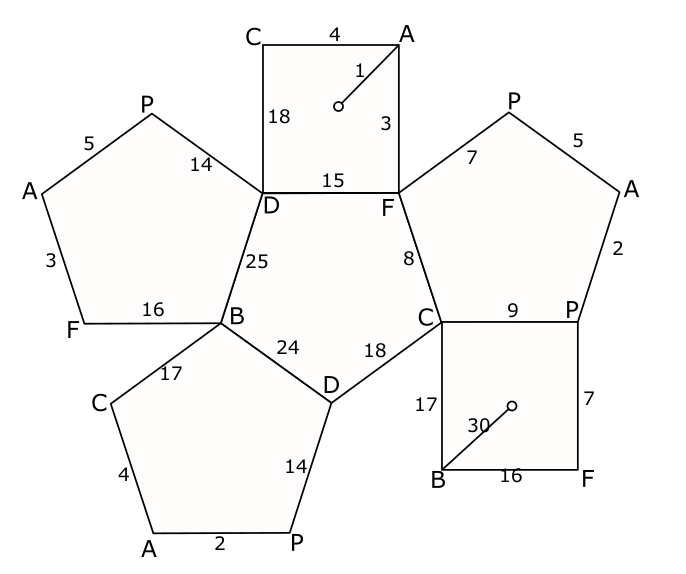}}
        \label{i4z2}
    \end{minipage}
    \caption{$I_4 \rightarrow I_4/_{\mathbb{Z}_2}$}
\end{figure}
\begin{prop}\label{prop_z2}
    $\mathcal{B}/_{\mathbb{Z}_2}$ is biholomorphic to the algebraic curve defined by the affine equation
    \begin{equation}\label{eq_i4_z2}
        y^2 = (4x^3+8x^2+7x+1)(x^3+7x^2+8x+4).
    \end{equation}
\end{prop}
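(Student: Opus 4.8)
The plan is to run the same argument as in the proof of Proposition~\ref{prop_z3}, now with $\mathbb{Z}_2=\langle(24)(35)\rangle$, which in the coordinates $(u_1:u_2:u_3:u_4:u_5)$ of \cite{Riera} acts as the involution $\phi_1\colon(u_1:u_2:u_3:u_4:u_5)\mapsto(u_1:u_2:u_3:-u_4:-u_5)$. First I would determine the genus of $\mathcal{B}/_{\mathbb{Z}_2}$ by Riemann--Hurwitz. The fixed locus of $\phi_1$ on $\mathbb{CP}^4$ is $\{u_4=u_5=0\}\cup\{u_1=u_2=u_3=0\}$; intersecting with (\ref{eq1}), the first piece turns out to be empty on $\mathcal{B}$, while the second gives exactly the two points $(0:0:0:1:i)$ and $(0:0:0:1:-i)$. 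So $\phi_1$ has two fixed points on $\mathcal{B}$, and $2\cdot 4-2=2(2g'-2)+2$ gives $g'=2$. Hence $\mathcal{B}/_{\mathbb{Z}_2}$ is hyperelliptic, of the form $y^2=P(x)$ with $\deg P\le 6$, whose roots are the branch points of the hyperelliptic covering.

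Next I would realize this covering explicitly. Consider the $\phi_1$-invariant projection $\pi\colon\mathcal{B}\to\mathbb{CP}^1$, $(u_1:\dots:u_5)\mapsto(u_2:u_3)$. Putting $x=u_2/u_3$ and using $u_1=-u_2-u_3$, the last two equations of (\ref{eq1}) form a \emph{linear} system in $u_4^2$ and $u_5^2$, whose solution in the affine chart $u_3=1$ is
\[
u_4^2=\frac{x^3+7x^2+8x+4}{x-1},\qquad u_5^2=-\frac{4x^3+8x^2+7x+1}{x-1}.
\]
Thus a generic fibre of $\pi$ consists of the four points $(\,\cdot:\cdot:\cdot:\pm u_4:\pm u_5)$, so $\deg\pi=4$, and $\phi_1$ breaks each fibre into the two orbits $\{(u_4,u_5),(-u_4,-u_5)\}$ and $\{(u_4,-u_5),(-u_4,u_5)\}$. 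Therefore $\pi$ factors as $\mathcal{B}\to\mathcal{B}/_{\mathbb{Z}_2}\to\mathbb{CP}^1$ with the second map of degree two, and this is the hyperelliptic covering. Its branch points are the $x$ at which the two orbits collide, i.e. where $u_4^2=0$ or $u_5^2=0$, namely the six roots of $(x^3+7x^2+8x+4)(4x^3+8x^2+7x+1)$, consistent with $g'=2$. Finally the $\phi_1$-invariant function $y:=i\,u_4u_5(x-1)$ (the factor $x-1$ clears the denominator, the $i$ corrects the sign) descends to $\mathcal{B}/_{\mathbb{Z}_2}$ and satisfies $y^2=(4x^3+8x^2+7x+1)(x^3+7x^2+8x+4)$; since $\mathcal{B}/_{\mathbb{Z}_2}$ and the smooth model of $y^2=P(x)$ are smooth projective curves, this birational map is a biholomorphism, proving (\ref{eq_i4_z2}).

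The elimination producing the formulas for $u_4^2,u_5^2$ and the tracking of the scaling factor in $y$ are routine. The main obstacle is the careful treatment of the exceptional locus: the base point $u_2=u_3=0$ of $\pi$, through which the two fixed points of $\phi_1$ pass; the fibre over $x=1$, where $u_4^2$ and $u_5^2$ blow up; and the fibres over $x=\infty$. One must verify that none of these introduce a spurious branch point of the hyperelliptic covering (so that $P$ is precisely the degree-$6$ polynomial above, in the stated normalization, rather than a proper factor or a Möbius-transformed variant) and that the birational correspondence extends to an isomorphism of the smooth projective models. Since $P(1)=400\neq 0$ and the direct count already yields exactly the expected six branch points, these verifications should go through without difficulty.
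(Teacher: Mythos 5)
Your proposal follows essentially the same route as the paper's proof: the projection $(u_1:\dots:u_5)\mapsto(u_2:u_3)$, the linearity of the last two equations of (\ref{eq1}) in $u_4^2,u_5^2$, and the identification of the branch points as the roots of the two cubics $u_4^2=0$ and $u_5^2=0$. Your version is in fact somewhat more complete, since you justify $g(\mathcal{B}/_{\mathbb{Z}_2})=2$ by Riemann--Hurwitz from the two fixed points $(0:0:0:1:\pm i)$ and exhibit the descent function $y=i\,u_4u_5(x-1)$ explicitly, both of which the paper leaves implicit.
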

\begin{proof}
    Consider $\mathbb{Z}_2 = \langle \phi_1 \rangle$, where 
    $$\phi_1: (u_1:u_2:u_3:u_4:u_5) \mapsto (u_1:u_2:u_3:-u_4:-u_5).$$
    Since the genus of $\mathcal{B}/_{\mathbb{Z}_2}$ is equal to 2, it follows that $\mathcal{B}/_{\mathbb{Z}_2}$ is a hyperelliptic curve. Hence, $\mathcal{B}/_{\mathbb{Z}_2}$ is given by an equation of the form
    $$y^2 = Q(x),$$
    where roots of a polynomial $Q(x)$ are ramification points of a hyperelliptic covering.

    Consider the projection
    $$\mathcal{B} \rightarrow \mathbb{CP}^1, \quad (u_1:u_2:u_3:u_4:u_5) \mapsto (u_2:u_3).$$
    Let us find the number of preimages of a regular point $(u_2: u_3)$. Note, the second and the third equations from the system (\ref{eq1}) are linear in two variables $u_4^2$ and $u_5^2$:
    \begin{equation}\label{eq_proof_z2_1}
        \begin{cases}
            3u_2^2 + 3u_3^2 + 4u_2u_3 + u_4^2 + u_5^2 = 0,\\
            u_2^3 + u_3^3 + 4u_2^2u_3 + 4u_2u_3^2 - u_2u_4^2 - u_3u_5^2 = 0.
        \end{cases}
    \end{equation}
    If $u_4^2, u_5^2$ are solutions of (\ref{eq_proof_z2_1}) in terms of $u_2, u_3$, then a point $(u_2, u_3) \in \mathbb{CP}^1$ has 4 preimages on Bring's curve $\mathcal{B}$:
    \begin{gather*}
        (u_1:u_2:u_3:u_4:u_5),\: (u_1:u_2:u_3:-u_4:-u_5),\\ (u_1:u_2:u_3:-u_4:u_5),\: (u_1:u_2:u_3:u_4:-u_5).
    \end{gather*}
    These four preimages on Bring's curve are mapped into two points on $\mathcal{B}/_{\mathbb{Z}_2}$:
    \begin{gather*}
        [(u_1:u_2:u_3:-u_4:u_5)]_{(u_4 \leftrightarrow -u_4, u_5 \leftrightarrow -u_5)} = [(u_1:u_2:u_3:u_4: -u_5)]_{(u_4 \leftrightarrow -u_4, u_5 \leftrightarrow -u_5)},\\
         [(u_1:u_2:u_3:u_4:u_5)]_{(u_4 \leftrightarrow -u_4, u_5 \leftrightarrow -u_5)} = [(u_1:u_2:u_3:-u_4:-u_5)]_{(u_4 \leftrightarrow -u_4, u_5 \leftrightarrow -u_5)}.
    \end{gather*}
    Thus, $(u_2:u_3) \in \mathbb{CP}^1$ has 2 preimages on $\mathcal{B}/_{\mathbb{Z}_2}$. So, we have a branched covering of degree 2
    $$\mathcal{B}/_{\mathbb{Z}_2} \rightarrow \mathbb{CP}^1,$$
    i.e. a hyperelliptic covering. 
    
    A point $(u_2: u_3)$ is a ramification point iff the system (\ref{eq_proof_z2_1}) has a solution $(u_4, u_5)$ for given $u_2, u_3$ such that $u_4 = 0$ or $u_5 = 0$. 
    
    Setting $u_4 = 0$ in (\ref{eq_proof_z2_1}), we obtain
    \begin{equation}\label{eq2}
        \begin{cases}
            3u_2^2 + 3u_3^2 + 4u_2u_3 + u_5^2 = 0,\\
            u_2^3 + u_3^3 + 4u_2^2u_3 + 4u_2u_3^2 - u_3u_5^2 = 0.        
        \end{cases}
    \end{equation}
    If we express $u_5^2$ in terms of $u_2, u_3$ from the first equation of (\ref{eq2}) and then substitute the expression in the second equation, we get
    \begin{equation}\label{eq3}
        u_2^3 + 4u_3^3 + 7u_2^2u_3 + 8u_2u_3^2 = 0.
    \end{equation}
    So, if $x = \frac{u_2}{u_3}$ is a root of the equation 
    $$x^3+7x^2+8x+4=0,$$
    then $(u_2:u_3)$ is a ramification point.

    Consider the case when $u_5 = 0$. Putting $u_5 = 0$ in (\ref{eq_proof_z2_1}), we get
    \begin{equation}\label{eq_proof_z2_2}
        \begin{cases}
            3u_2^2 + 3u_3^2 + 4u_2u_3 + u_4^2 = 0,\\
            u_2^3 + u_3^3 + 4u_2^2u_3 + 4u_2u_3^2 - u_2u_4^2 = 0.        
        \end{cases}
    \end{equation}
    If we express $u_4^2$ in terms of $u_2, u_3$ from the first equation of (\ref{eq_proof_z2_2}) and then substitute the expression in the second equation, then we obtain
    \begin{equation}\label{eq4}
        4u_2^3 + u_3^3 + 8u_2^2u_3 + 7u_2u_3^2 = 0.
    \end{equation}
    So, if $x = \frac{u_2}{u_3}$ is a root of the equation
    $$4x^3+8x^2+7x+1=0,$$
    then $(u_2:u_3)$ is a ramification point.

    It follows that  
    $$Q(x) = (4x^3+8x^2+7x+1)(x^3+7x^2+8x+4) = 4x^6 + 36x^5 + 95x^4 + 130x^3 + 95x^2 + 36x + 4.$$
    So, $\mathcal{B}/_{\mathbb{Z}_2}$ is given by 
    $$y^2 = (4x^3+8x^2+7x+1)(x^3+7x^2+8x+4) = 4x^6 + 36x^5 + 95x^4 + 130x^3 + 95x^2 + 36x + 4.$$
\end{proof}
\subsection{Elliptic curve for $I_4/_{\mathbb{Z}_2\times\mathbb{Z}_2}$}
\begin{figure}
    \centering
    \includegraphics[scale=0.35]{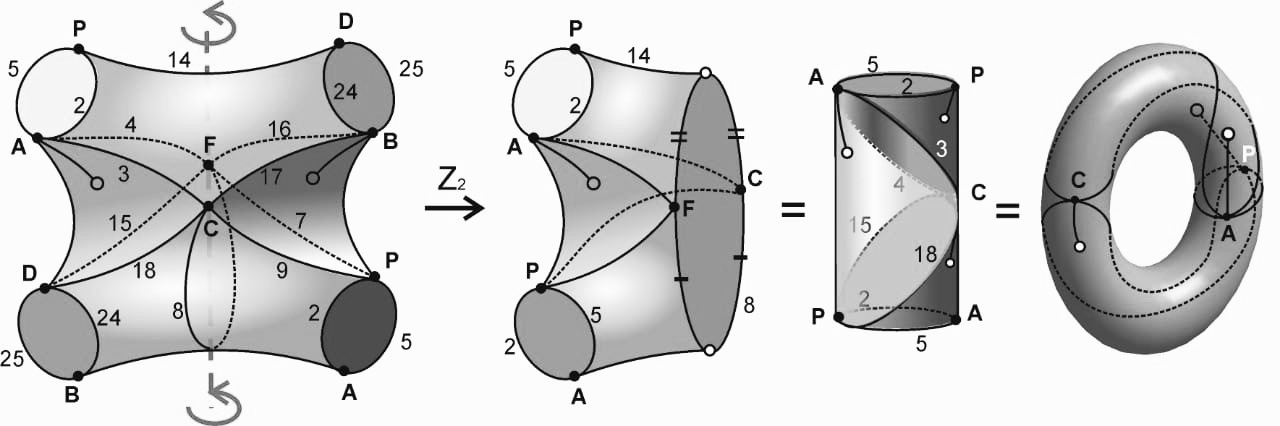}
    \caption{$I_4/_{\mathbb{Z}_2} \rightarrow I_4/_{V_4}$}
    \label{im_i4_v4}
\end{figure}
\begin{prop}\label{prop_z2xz2}
    $\mathcal{B}/_{V_4}$ is given by the following equation:
    \begin{equation}\label{eq_z2xz2}
        y^2 = x^4 + 10x^3 +25x^2 -100x.
    \end{equation}
\end{prop}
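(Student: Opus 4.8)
The plan is to obtain $\mathcal{B}/_{V_4}$ as a further quotient of the hyperelliptic curve $\mathcal{B}/_{\mathbb{Z}_2}$ of Proposition~\ref{prop_z2}, mirroring the passage from $\mathcal{B}/_{\mathbb{Z}_3}$ to $\mathcal{B}/_{S_3}$ in Proposition~\ref{prop_s3}. I take $V_4=\langle(24)(35),\,(23)(45)\rangle\subset\mathrm{Aut}(I_4)$, which in the $u$-coordinates is $\langle\phi_1,\phi_2\rangle$. Since $V_4$ is abelian, $\phi_2$ descends to an involution $\phi_2^{\mathbb{Z}_2}$ of $\mathcal{B}/_{\langle\phi_1\rangle}=\mathcal{B}/_{\mathbb{Z}_2}$, and $\mathcal{B}/_{V_4}=(\mathcal{B}/_{\mathbb{Z}_2})/_{\langle\phi_2^{\mathbb{Z}_2}\rangle}$; so it suffices to quotient the curve (\ref{eq_i4_z2}) by $\phi_2^{\mathbb{Z}_2}$.

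First I would identify $\phi_2^{\mathbb{Z}_2}$ on the model (\ref{eq_i4_z2}). Since $\phi_2$ swaps $u_2\leftrightarrow u_3$, it sends $x=u_2/u_3$ to $1/x$; and because $Q(x)=(4x^3+8x^2+7x+1)(x^3+7x^2+8x+4)$ is palindromic (the two cubic factors are mutual reversals, so $x^6Q(1/x)=Q(x)$), the map lifts to $(x,y)\mapsto(1/x,\,\varepsilon y/x^3)$ with $\varepsilon=\pm1$. The sign would be pinned down in the spirit of Proposition~\ref{prop_s3}, by locating a fixed point of $\phi_2^{\mathbb{Z}_2}$: the points $(0:1:-1:\pm i:\mp i)$ of $\mathcal{B}$ (in $u$-coordinates) are fixed by $\phi_2$, satisfy (\ref{eq1}), and are exchanged by $\phi_1$, hence descend to a single fixed point of $\phi_2^{\mathbb{Z}_2}$ lying over $x=u_2/u_3=-1$; since $Q(-1)=-4\neq0$ and a point $(-1,y)$ with $y\neq0$ is fixed by $(x,y)\mapsto(1/x,\varepsilon y/x^3)$ only when $\varepsilon=-1$, we conclude $\phi_2^{\mathbb{Z}_2}\colon(x,y)\mapsto(1/x,\,-y/x^3)$.

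Next, following (\ref{prop_s3_z}), I would pass to $z=\dfrac{x-1}{x+1}$ and $w=\dfrac{\lambda y}{(x+1)^3}$ with $\lambda$ chosen to normalize the leading coefficient. Then $x\mapsto1/x$ becomes $z\mapsto-z$, $\phi_2^{\mathbb{Z}_2}$ acts by $(z,w)\mapsto(-z,-w)$, and equation (\ref{eq_i4_z2}) takes the even form $w^2=z^6+10z^4+25z^2-100$. Writing the right-hand side as $\prod_{i=1}^{3}(z^2-c_i^2)$ and using the branched cover $(z,w)\mapsto z^2$, whose generic fibre $\{(\pm z,\pm w)\}$ is collapsed onto the two points $[(z,w)],\,[(z,-w)]$ of $\mathcal{B}/_{V_4}$, one obtains a degree-two cover $\mathcal{B}/_{V_4}\to\mathbb{CP}^1$ ramified exactly over $\{0,c_1^2,c_2^2,c_3^2\}$. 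Hence $\mathcal{B}/_{V_4}$ is $y^2=x\prod_{i=1}^{3}(x-c_i^2)=x(x^3+10x^2+25x-100)=x^4+10x^3+25x^2-100x$, as claimed.

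The step I expect to be delicate is the determination of the sign $\varepsilon$: the projection $(u_1:\cdots:u_5)\mapsto(u_2:u_3)$ is indeterminate precisely over $x=1$, so one cannot copy Proposition~\ref{prop_s3} literally by evaluating $\beta_{I_4}$ at a point over $x=1$; this is why I would route the argument through the explicit fixed points of $\phi_2$ on $\mathcal{B}$, the Riemann--Hurwitz count (an involution of a genus-$2$ curve with genus-$1$ quotient has exactly two fixed points) then confirming that the two points over $x=-1$ form the whole fixed locus. The remaining ingredients --- the palindromy of $Q$, the fractional-linear change of variables, and the final identification of the quartic --- are routine bookkeeping.
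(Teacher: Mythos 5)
Your proposal is correct and takes essentially the same route as the paper: quotienting the hyperelliptic model of $\mathcal{B}/_{\mathbb{Z}_2}$ by $\phi_2^{\mathbb{Z}_2}$, determining the sign of the lift via the points $(0:1:-1:\pm i:\mp i)$ lying over $x=-1$, then passing to $z=\tfrac{x-1}{x+1}$, $w=\lambda y/(x+1)^3$ and the covering $(z,w)\mapsto z^2$. The only differences are cosmetic --- the paper fixes the sign by contradiction using the very same fixed points, while you argue directly and add a Riemann--Hurwitz sanity check --- and your side remark that the projection is indeterminate over $x=1$ is inaccurate (it is indeterminate only where $u_2=u_3=0$) but plays no role in the argument.
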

\textit{Notation.} By $\phi_2^{\mathbb{Z}_2}$ we denote the automorphism of the curve $\mathcal{B}/_{\mathbb{Z}_2}$ such that the following diagram is commutative:
\begin{gather}\label{diag_aut_z2}
    \begin{CD}
        \mathcal{B} @>\phi_2>> \mathcal{B}\\
        @VVV                      @VVV      \\
        \mathcal{B}/_{\langle\phi_1\rangle} @>\phi_2^{\mathbb{Z}_2}>> \mathcal{B}/_{\langle\phi_1\rangle}
    \end{CD}
\end{gather}
\begin{remark}
    The diagram (\ref{diag_aut_z2}) looks in initial coordinates $(x_1:x_2:x_3:x_4:x_5)$ as follows: 
    \begin{gather}\label{diag_aut_z2_x}
        \begin{CD}
            \mathcal{B} @>(23)(45)>> \mathcal{B}\\
            @VVV                      @VVV      \\
            \mathcal{B}/_{\langle(24)(35)\rangle} @>(23)(45)^{\mathbb{Z}_2}>> \mathcal{B}/_{\langle(24)(35)\rangle}
        \end{CD}
    \end{gather}
\end{remark}
\textit{Sketch of proof of Proposition \ref{prop_z2xz2}.} Consider 
$$V_4 = \{(), (24)(35), (23)(45), (25)(34) \} = \{\ \text{id}, \phi_1, \phi_2, \phi_1 \phi_2\} \subset \text{Aut}(I_4).$$
In order to find $\mathcal{B}/_{V_4}$ we compute an equation of the quotient of the curve $\mathcal{B}/_{\langle\phi_1\rangle}$ (given by (\ref{eq_i4_z2})) by the action of $\phi_2^{\mathbb{Z}_2} \in \text{Aut}(\mathcal{B}/_{\mathbb{Z}_2})$.
\begin{prop}
    The automorphism $\phi_2^{\mathbb{Z}_2} \in \text{Aut}(\mathcal{B}/_{\mathbb{Z}_2})$ acts on the curve $\mathcal{B}/_{\mathbb{Z}_2}$, given by (\ref{eq_i4_z2}), as follows:
    \begin{gather*}
        \phi_2^{\mathbb{Z}_2}: (x,y) \mapsto (\frac{1}{x}, -\frac{y}{x^3}).
    \end{gather*}
\end{prop}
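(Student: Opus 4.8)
The plan is to mirror the argument used in the previous section for $(12)(45)^{\mathbb{Z}_3}$: first determine the action of $\phi_2^{\mathbb{Z}_2}$ on the affine coordinate $x$, then its action on $y$ up to a sign, and finally fix the sign by locating the fixed points of $\phi_2^{\mathbb{Z}_2}$ and invoking the Belyi property of $\beta_{I_4/_{\mathbb{Z}_2}}$.

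On $\mathcal{B}/_{\mathbb{Z}_2}$ the coordinate $x$ is the image of $u_2/u_3$, and $\phi_2$ interchanges $u_2$ and $u_3$, so $\phi_2^{\mathbb{Z}_2}$ sends $x$ to $1/x$. Next, the right-hand side of (\ref{eq_i4_z2}), $Q(x) = (4x^3+8x^2+7x+1)(x^3+7x^2+8x+4)$, is a reciprocal polynomial of degree $6$ (its two cubic factors are mutual reciprocals), so $Q(1/x) = x^{-6}Q(x)$. Hence the only automorphisms of $\mathcal{B}/_{\mathbb{Z}_2}$ lying over $x \mapsto 1/x$ are $(x,y) \mapsto (1/x,\, y/x^3)$ and $(x,y) \mapsto (1/x,\, -y/x^3)$, and $\phi_2^{\mathbb{Z}_2}$ is one of these.

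To fix the sign, note $Q(1) = 400 \neq 0$ and $Q(-1) = -4 \neq 0$, so $(x,y)\mapsto(1/x, y/x^3)$ has its fixed points over $x = 1$ while $(x,y)\mapsto(1/x, -y/x^3)$ has its fixed points over $x = -1$; since $\mathcal{B}/_{V_4}$ has genus $1$ (Proposition \ref{prop_z2xz2}), Riemann--Hurwitz shows $\phi_2^{\mathbb{Z}_2}$ has exactly two fixed points, so it suffices to show they lie over $x = -1$. A fixed point of $\phi_2^{\mathbb{Z}_2}$ lifts to a point $P$ of $\mathcal{B}$ with $\phi_2(P) \in \{P,\, \phi_1(P)\}$, i.e. $P$ is a fixed point on $\mathcal{B}$ of the projective involution $\phi_2 = (23)(45)$ or of $\phi_1\phi_2 = (25)(34)$. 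The fixed locus of $(23)(45)$ in $\mathbb{CP}^4$ is the union of the $(+1)$-eigenplane $\{x_2 = x_3,\ x_4 = x_5\}$ and the $(-1)$-eigenline $\{x_1 = 0,\ x_3 = -x_2,\ x_5 = -x_4\}$, and likewise for $(25)(34)$. On a $(+1)$-eigenplane two of the coordinates coincide, so $\prod_{i<j}(x_i - x_j) = 0$ and hence $256a^5 + 3125b^4 = 0$; then (\ref{eq_beta_i4}) gives $\beta_{I_4}(P) = 1 - 2 = -1$ (no exceptional case occurs, since $a = 0$ together with a vanishing discriminant would force all $x_i = 0$). But $\beta_{I_4/_{\mathbb{Z}_2}} = \beta_{I_4/_{V_4}} \circ \varphi$ with $\varphi$ the quotient map by $\phi_2^{\mathbb{Z}_2}$, so every fixed point of $\phi_2^{\mathbb{Z}_2}$ is a critical point of the Belyi function $\beta_{I_4/_{\mathbb{Z}_2}}$, whose critical values lie in $\{0,1,\infty\}$; as $-1 \notin \{0,1,\infty\}$, $\mathcal{B}$ meets neither $(+1)$-eigenplane, so $P$ lies on a $(-1)$-eigenline. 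There $x_1 = 0$, hence $u_2 = -u_3$ and $x = -1$, which forces $\phi_2^{\mathbb{Z}_2}:(x,y)\mapsto(1/x,-y/x^3)$. As a consistency check, on the $(-1)$-eigenlines $b = -x_1x_2x_3x_4x_5 = 0$, so $\beta_{I_4} = 1$ there — these points are white vertices of $I_4$ — and the ramification of $\varphi$ indeed sits over the value $1$.

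The step I expect to be the main obstacle is the fixed-locus bookkeeping for the projective involutions $\phi_2$ and $\phi_1\phi_2$: one must keep the two eigenspaces of each apart, be sure that no fixed point of $\phi_2^{\mathbb{Z}_2}$ is lost or double-counted in passing between $\mathcal{B}$ and $\mathcal{B}/_{\mathbb{Z}_2}$, and confirm that $\phi_2^{\mathbb{Z}_2}$ really does have fixed points, so that the dichotomy above forces the minus sign (note that a literal mimicry of the $\mathbb{Z}_3$ case fails here, because on $\mathcal{B}$ the locus $u_2 = u_3$ is degenerate rather than a curve on which $\beta_{I_4}$ is constant). If one prefers to avoid the Belyi shortcut, the emptiness of $\mathcal{B} \cap \{x_2 = x_3,\ x_4 = x_5\}$ can be verified directly: on that plane, after eliminating $x_1$ via the linear Bring relation, the two remaining relations in (\ref{eqbring}) become a quadratic and a cubic in the ratio $x_2/x_4$ with no common root.
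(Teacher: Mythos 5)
Your proof is correct, but it reaches the sign by a genuinely different route from the paper's. The paper pins down the sign by direct computation at a single fibre: it lists the four preimages $(0:1:-1:\pm i:\pm i)$ of $(u_2:u_3)=(1:-1)$ on $\mathcal{B}$, applies $\phi_2$ to them explicitly, and observes that the two resulting points of $\mathcal{B}/_{\mathbb{Z}_2}$ over $x=-1$ are fixed, contradicting the plus-sign candidate. You instead transport the Belyi-value argument that the paper uses only for $(12)(45)^{\mathbb{Z}_3}$: you decompose the fixed locus of $\phi_2^{\mathbb{Z}_2}$ into fixed points of $\phi_2$ and of $\phi_1\phi_2$ upstairs, rule out the $(+1)$-eigenplanes because there two coordinates coincide, the discriminant vanishes, and $\beta_{I_4}=-1\notin\{0,1,\infty\}$, and conclude that the fixed points sit on the $(-1)$-eigenlines, hence over $x=-1$. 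Your method needs no explicit points on $\mathcal{B}$ and makes the underlying mechanism (ramification of $\varphi$ must sit over Belyi values) transparent; the paper's is shorter and purely computational. Your treatment of the $a=0$ degeneracy and your remark that a literal mimicry of the $\mathbb{Z}_3$ proof fails (the locus $u_2=u_3$ does not force the discriminant to vanish) are both correct.

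One small repair: do not cite Proposition \ref{prop_z2xz2} for the genus of $\mathcal{B}/_{V_4}$ --- its proof relies on the present proposition, so that reference is circular. It is also unnecessary: since $Q(1)=400\neq 0$ and $Q(-1)=-4\neq 0$, each of the two candidate lifts $(x,y)\mapsto(1/x,\pm y/x^3)$ has exactly two fixed points (over $x=1$ and over $x=-1$ respectively), so $\phi_2^{\mathbb{Z}_2}$, being one of them, automatically has two fixed points, and your eigenline analysis then forces the minus sign with no appeal to the genus of the quotient.
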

Figure \ref{im_i4_v4} visualizes the action of $\phi_2^{\mathbb{Z}_2}$ on $\mathcal{B}/_{\mathbb{Z}_2}$.
\begin{proof}
    Note that the curve $\mathcal{B}/_{\mathbb{Z}_2}$, given by (\ref{eq_i4_z2})
    $$y^2 = 4x^6 + 36x^5 + 95x^4 + 130x^3 + 95x^2 + 36x + 4,$$
    is a hyperelliptic covering
    $$\mathcal{B}/_{\mathbb{Z}_2} \rightarrow \mathbb{CP}^1,$$
    induced by the projection
    \begin{gather*}
        \mathcal{B} \rightarrow \mathbb{CP}^1, \quad (u_1:u_2:u_3:u_4:u_5) \mapsto (u_2: u_3)
    \end{gather*}
    (see proof of Proposition \ref{prop_z2}). Because the automorhism of Bring's curve 
    $$\phi_2: (u_1:u_2:u_3:u_4:u_5) \mapsto (u_1:u_3:u_2: u_5:u_4)$$
    swaps $u_2$ and $u_3$, the automorphism $\phi_2^{\mathbb{Z}_2}\in \text{Aut}(\mathcal{B}/_{\mathbb{Z}_2})$ maps $x = \frac{u_2}{u_3}$ to $\frac{1}{x} = \frac{u_3}{u_2}$.
    Thus, $\phi_2^{\mathbb{Z}_2} (x,y)$ is either $(\frac{1}{x}, \frac{y}{x^3})$ or $(\frac{1}{x}, -\frac{y}{x^3})$. Assume that 
    $$\phi_2^{\mathbb{Z}_2}: (x,y) \mapsto (\frac{1}{x}, \frac{y}{x^3}).$$
    We will show that there is a contradiction. Setting $x = -1$ in (\ref{eq_i4_z2}), we obtain $y^2 = -4$. Note that the point $(-1, 2i) \in \mathcal{B}/_{\mathbb{Z}_2}$ is not a fixed point of $\phi_2^{\mathbb{Z}_2}\in \text{Aut}(\mathcal{B}/_{\mathbb{Z}_2})$. Indeed,
     $$\phi_2^{\mathbb{Z}_2}(-1, 2i) = (\frac{1}{-1}, \frac{2i}{-1}) = (-1, -2i).$$
    On the other hand, consider the covering $\mathcal{B} \rightarrow \mathbb{CP}^1$. The point $(1:-1)$ has 4 preimages:
    \begin{gather*}
        p_1 = (0:1:-1:i:i), \quad p_2 = (0:1:-1: -i:-i),\\
        q_1 = (0:1:-1:i:-i),\quad  q_2 = (0:1:-1:-i:i),
    \end{gather*}
    which are mapped into 2 points $p =(-1, 2i)$ and  $q =(-1, -2i)$ on $\mathcal{B}/_{\mathbb{Z}_2}$.
    Since 
    \begin{gather*}
        \phi_2(p_1) = p_2, \quad \phi_2(p_2) = p_1,\\
        \phi_2(q_1) = q_1, \quad \phi_2(q_2) = q_2,
    \end{gather*}
    the points $p$ and $q$ are fixed by $\phi_2^{\mathbb{Z}_2}\in \text{Aut}(\mathcal{B}/_{\mathbb{Z}_2})$, i.e. 
    \begin{gather*}
        p = \phi_2^{\mathbb{Z}_2}(p), \quad q = \phi_2^{\mathbb{Z}_2}(q). 
    \end{gather*}
    So, the point $(-1, 2i)$ is a fixed point of $\phi_2^{\mathbb{Z}_2}$. This is a contradiction.

    Hence, $\phi_2^{\mathbb{Z}_2}: (x,y) \mapsto (\frac{1}{x}, -\frac{y}{x^3}).$
\end{proof}
\begin{remark}
   The automorphism $(x,y) \mapsto (\frac{1}{x}, \frac{y}{x^3})$ of the curve $\mathcal{B}/_{\mathbb{Z}_2}$ corresponds to the automorphism $(2345)$ of Bring's curve in initial coordinates.
\end{remark}
\begin{proof}[Proof of Proposition \ref{prop_z2xz2}.]
    Let us find the quotient  of the curve $\mathcal{B}/_{\mathbb{Z}_2}$, given by the equation (\ref{eq_i4_z2}): 
    $$y^2 = 4x^6 + 36x^5 + 95x^4 + 130x^3 + 95x^2 + 36x + 4,$$
    by the action of the automorphism
    \begin{gather*}
        \phi_2^{\mathbb{Z}_2}: (x,y) \mapsto (\frac{1}{x}, -\frac{y}{x^3}).
    \end{gather*}
    We set 
    $$z(x) = \frac{x-1}{x+1},\quad w = \frac{4iy}{(x+1)^3}.$$
    The equation (\ref{eq_i4_z2}) in terms of $z$ and $w$:
    \begin{equation}\label{eq_z2_zw}
        w^2 = z^6 + 10z^4 + 25z^2 - 100.
    \end{equation}
    Note that $z(\frac{1}{x}) = -z(x)$. Let $\pm d_i$ be roots of the right-hand side of (\ref{eq_z2_zw}). Then we have
    $$w^2 = \prod\limits_{i=1}^3(z^2 - d_i^2).$$
    The automorphism $\phi_2^{\mathbb{Z}_2}$ acts in terms of $z$ and $w$ as follows:
    $$(z, w) \mapsto (-z, -w).$$
    Consider the branched covering of a projective line
    \begin{gather*}
        \mathcal{B}/_{\mathbb{Z}_2} \rightarrow \mathbb{CP}^1, \quad (z,w) \mapsto z^2.
    \end{gather*}
    A regular point $\tilde z = z^2 \in \mathbb{CP}^1$ has 4 preimages on $\mathcal{B}/_{\mathbb{Z}_2}$:
    \begin{gather*}
        (z, w), \quad (-z, -w), \quad (-z, w), \quad (z, -w).
    \end{gather*}
     These four points merge into two points on $\mathcal{B}/_{\mathbb{Z}_2 \times \mathbb{Z}_2}$:
    \begin{gather*}
        [(z,w)]_{(z,w)\mapsto (-z,-w)}, \quad [(z,-w)]_{(z,w)\mapsto (-z,-w)}.
    \end{gather*}
    So, we obtain a branched covering $\mathcal{B}/_{\mathbb{Z}_2 \times \mathbb{Z}_2} \rightarrow \mathbb{CP}^1$ of degree 2. Ramification points of the covering are $\{0, d_1^2, d_2^2, d_3^2\}$. Thus, the curve $\mathcal{B}/_{\mathbb{Z}_2 \times \mathbb{Z}_2}$ is given by 
    $$y^2 = x\prod\limits_{i=1}^{3}(x-d_i^2) =x(x^3+10x^2+25x-100).$$
\end{proof}
\begin{remark}
    It turns out that
    $$j_{\mathcal{B}/_{V_4}} = j_{\mathcal{B}/_{S_3}} = -\frac{121945}{32}.$$
    Hence, we have the following curious isomorphism of elliptic curves: 
    $$\mathcal{B}/_{V_4} \simeq \mathcal{B}/_{S_3}.$$
\end{remark}
\subsection{Elliptic curve for $I_4/_{A_4}$}
\begin{figure}
    \centering
    \includegraphics[scale=0.35]{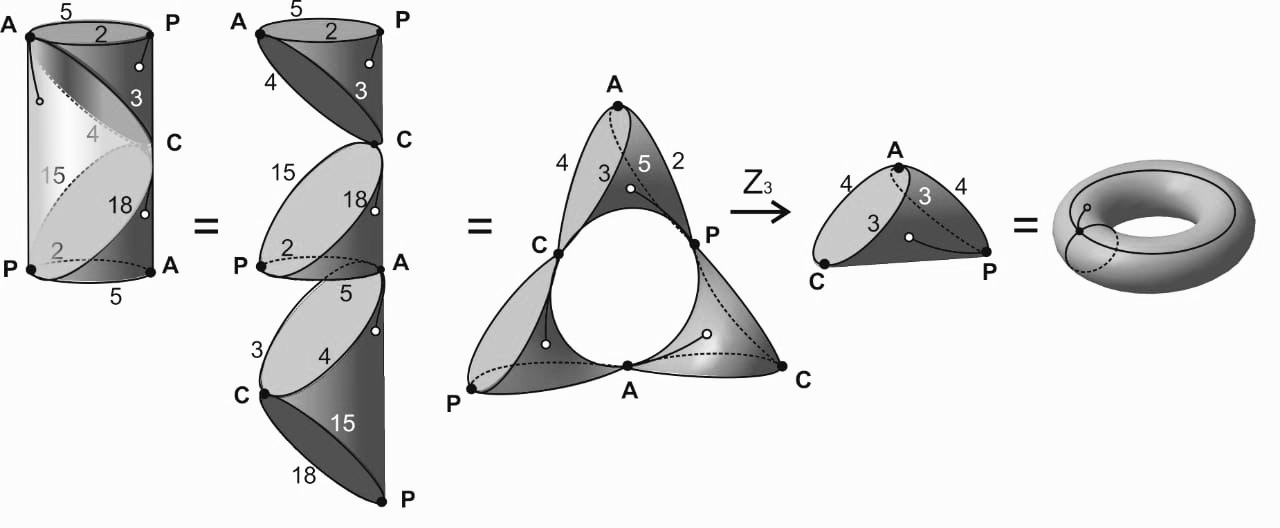}
    \caption{$I_4/_{V_4} \rightarrow I_4/_{A_4}$}
    \label{im_i4_a4}
\end{figure}
\begin{prop}
    $\mathcal{B}/_{A_4}$ is given by the following equation:
    \begin{equation*}
        y^2 + xy + y = x^3 + 549x - 2202.
    \end{equation*}
\end{prop}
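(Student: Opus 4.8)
The plan is to obtain $\mathcal{B}/_{A_4}$ as a degree-$3$ quotient of the elliptic curve $E:=\mathcal{B}/_{V_4}$ produced in Proposition~\ref{prop_z2xz2}, in the same two-step fashion used for $\mathcal{B}/_{S_3}$ in Proposition~\ref{prop_s3} and for $\mathcal{B}/_{V_4}$ itself. Since $V_4\triangleleft A_4$ and $A_4/V_4\cong\mathbb{Z}_3$, any $3$-cycle $\rho\in A_4$ normalizing $V_4$ induces an automorphism $\rho^{V_4}\in\text{Aut}(E)$ of order $3$ fitting into a commutative square analogous to (\ref{diag_aut_z2_x}), and then $\mathcal{B}/_{A_4}=E/\langle\rho^{V_4}\rangle$. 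Here one must use a $3$-cycle supported on the indices moved by $V_4=\{(),(24)(35),(23)(45),(25)(34)\}$, for instance $\rho=(234)$, so that $A_4=\langle V_4,(234)\rangle$ is realized as the even permutations of $\{2,3,4,5\}$.

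First I would bring $E:\ y^2=x(x^3+10x^2+25x-100)$ to a Weierstrass model, taking the rational point $(0,0)$ as the origin $O$; write the result as $\eta^2+a_1\xi\eta+a_3\eta=\xi^3+a_2\xi^2+a_4\xi+a_6$. Next, since the statement exhibits $\mathcal{B}/_{A_4}$ in Weierstrass form it has genus $1$, so Riemann--Hurwitz applied to the degree-$3$ map $E\to\mathcal{B}/_{A_4}$ forces it to be unramified; hence $\rho^{V_4}$ has no fixed points on $E$ and is therefore translation by a point $T\in E[3]$. To identify $T$ I would chase $\rho$ through the two successive quotients $\mathcal{B}\to\mathcal{B}/_{V_4}\to E$, transporting it along the changes of variables of this section: take a convenient point of $\mathcal{B}$ lying over $O$ — e.g. one of the distinguished vertex or face-centre orbits of $I_4$ — apply $\rho$, push the image down to $E$, read off its coordinates, and check directly that $3T=O$. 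Finally $\mathcal{B}/_{A_4}=E/\langle O,T,2T\rangle$ is computed by the standard explicit formulas for the quotient $3$-isogeny with that kernel, and passing to the minimal model yields $y^2+xy+y=x^3+549x-2202$; as a consistency check one compares $j$-invariants and verifies that the corresponding square of quotient maps commutes, as in the diagrams of the previous sections.

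The mathematical content here is light — it is the standard fact that the quotient of an elliptic curve by a fixed-point-free automorphism of order $3$ is a $3$-isogenous elliptic curve — so the main obstacle is computational bookkeeping: normalizing $\mathcal{B}/_{V_4}$ correctly, and above all correctly transporting the $\mathbb{Z}_3\subset A_4$ action through the earlier changes of variables to pin down $\rho^{V_4}$ (equivalently the $3$-torsion point $T$), after which the isogeny formulas and minimalization are routine. One point that should be argued explicitly in the write-up is that $\rho^{V_4}$ really is a translation and not an automorphism with fixed points — otherwise $E/\langle\rho^{V_4}\rangle$ would be $\mathbb{CP}^1$ rather than an elliptic curve — and this is exactly where the genus count via Riemann--Hurwitz is needed.
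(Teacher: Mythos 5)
Your proposal follows the same strategy as the paper: both identify $\mathcal{B}/_{A_4}$ as the quotient of the elliptic curve $E=\mathcal{B}/_{V_4}$ by a fixed-point-free automorphism of order $3$, hence by translation by a $3$-torsion point, so that $\mathcal{B}/_{A_4}$ is a $3$-isogenous elliptic curve. The differences lie in how the two key inputs are obtained. For freeness of the $\mathbb{Z}_3$-action the paper reads off from the quotient dessins (Figure \ref{im_i4_a4}) that $I_4/_{V_4}\rightarrow I_4/_{A_4}$ is an unbranched covering of degree $3$; you instead invoke Riemann--Hurwitz using the fact that the target has genus $1$ --- but that genus is part of what is being proved, so as written this step is circular. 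It is easily repaired: establish the genus of $\mathcal{B}/_{A_4}$ independently, e.g. by Riemann--Hurwitz for $\mathcal{B}\rightarrow\mathcal{B}/_{A_4}$ counting fixed points of the nontrivial elements of $A_4$ on $\mathcal{B}$, or from the quotient dessin as the paper does, and only then conclude that a degree-$3$ map between two genus-$1$ curves is unramified. For the final computation the paper passes to the minimal model $y^2+xy+y=x^3-76x+298$ of $E$ and looks up its $3$-isogenous curve in the LMFDB, whereas you propose to pin down the kernel $\{O,T,2T\}$ by transporting the $3$-cycle $\rho=(234)$ (which indeed normalizes $V_4=\{(),(24)(35),(23)(45),(25)(34)\}$ and generates $A_4$ with it) through the coordinate changes, and then apply the explicit isogeny formulas. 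Your route is more self-contained and, unlike a table lookup, it automatically selects the correct one among the possibly several $3$-isogenies of $E$; the price is a substantial amount of bookkeeping that your sketch does not actually carry out, and that transport of $\rho$ through the substitutions of the preceding sections is exactly the delicate step. With the circularity repaired, both arguments arrive at the stated equation.
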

\begin{proof}
A curve of genus 1, i.e. an elliptic curve, is a quotient of $\mathbb{C}$ by a lattice $\Lambda$. An elliptic curve $E = \mathbb{C}/_{\Lambda}$ is also an abelian group. The group action by addition
$$E\times E \ni (x,y) \mapsto x+y \in E$$
is a holomorphic map. Being a group, an elliptic curve has a privileged point, the origin $O \in E$.

Consider a point $p \in E$ of order 3 such that $p \neq O$. Translation by $p$ is an automorphism 
\begin{gather*}
    \tau_p: E \rightarrow E, \quad \tau_p: x \mapsto x+p
\end{gather*}
of order 3 of the Riemann surface $E$. It generates a group $\mathbb{Z}_3 = \langle\tau_p\rangle$, acting freely on $E$, and the quotient $E/_{\mathbb{Z}_3}$ is an elliptic curve. 

Figure \ref{im_i4_a4} illustrates that the map $I_4/_{V_4} \rightarrow I_4/_{A_4}$ is an unbranched covering of order 3. Thus, $\mathcal{B}/_{A_4}$ is a quotient of the elliptic curve $\mathcal{B}/_{V_4}$ by the action of $\langle\tau_p\rangle$ where $p \in \mathcal{B}/_{V_4}$ is a non-zero point of order 3. Note that standard Weierstrass form of $\mathcal{B}/_{V_4}$ is
$$y^2 + xy + y = x^3-76x + 298.$$
Searching the LMFDB (\cite{LMFDB}) shows that $(\mathcal{B}/_{V_4})/_{\mathbb{Z}_3}$ is 
$$y^2 + xy + y = x^3 + 549x - 2202.$$
\end{proof}

\section*{Acknowledgments}
We would like to express our deep gratitude to Alexander Zvonkin for turning our attention to such an interesting problem, and to participants of the seminar "Graphs on surfaces and curves over number fields" for their lively interest in the theory of dessins d'enfants. Also, we show appreciation for HSE Faculty of Mathematics for our opportunity of working together. 
\nocite{*}
\printbibliography
N. Ya. Amburg:
\\
NRC "Kurchatov institute", Moscow,  123182,
Russia
\\
Faculty of Mathematics, National Research University Higher School of  
Economics,  Moscow,  119048, Russia
\\
e-mail: amburg@mccme.ru

M. A. Kovaleva:
\\
Faculty of Mathematics, National Research University Higher School of  
Economics,  Moscow,  119048, Russia
\\
e-mail: makovaleva\underline{ }2@edu.hse.ru
\end{document}